\documentclass[a4paper]{amsart}
\usepackage[utf8]{inputenc}

\title{Metric Scott analysis}

\author{Ita{\"\i} Ben Yaacov}
\address{Ita{\"\i} Ben Yaacov \\
  Univ Lyon \\
  Université Claude Bernard Lyon 1 \\
  Institut Camille Jordan, CNRS UMR 5208 \\
  43 boulevard du 11 novembre 1918 \\
  69622 Villeurbanne \textsc{cedex} \\
  France}
\urladdr{\url{http://math.univ-lyon1.fr/~begnac/}}

\author{Michal Doucha}
\address{
  Michal Doucha \\
  Laboratoire de Mathématiques de Besançon \\
  Université de Franche-Comté \\
  16, route de Gray \\
  25030 Besançon \textsc{cedex} \\
  France}
\email{doucha@math.cas.cz}

\curraddr{Institute of Mathematics CAS \\
\v{Z}itn\'{a} 25 \\
115 67 Praha 1 \\
Czech Republic}

\author{Andr\'e Nies}
\address{
  Andr\'e Nies \\
  Department of Computer Science \\
  University of Auckland \\
  Private Bag 92019 \\
  Auckland \\
  New Zealand}
\email{andre@cs.auckland.ac.nz}

\author{Todor Tsankov}
\address{
  Todor Tsankov \\
  Institut de Math{\'e}matiques de Jussieu--PRG \\
  Universit\'e Paris 7, case 7012 \\
  75205 Paris \textsc{cedex} 13 \\
  France}
\email{todor@math.univ-paris-diderot.fr}

 \date{\today}

\subjclass[2010]{Primary 03C75, 03E15}
\keywords{continuous logic, infinitary logic, Scott rank, Scott sentence, López-Escobar theorem, Borel equivalence relations, Gromov--Hausdorff distance, Kadets distance, CLI groups}

% font selection
\usepackage[T1]{fontenc}

% use [osf] for old style figures in mathpazo ; sc for small caps only
\usepackage[osf]{mathpazo}

% Euler for mathcal
\usepackage{eucal}

\usepackage{hyperref}
\usepackage{my-macros}

\usepackage{enumitem}
\setlist[enumerate,1]{label=(\roman*), font=\normalfont}

\usepackage[initials, shortalphabetic]{amsrefs}

% Local macros
\newcommand{\Lomo}{\cL_{\omega_1\omega}}
\newcommand{\Lfin}{\cL_{\omega\omega}}

\newcommand{\dGH}{d_{\mathrm{GH}}}
\newcommand{\deGH}{d_{\mathrm{eGH}}}
\newcommand{\dK}{d_\mathrm{K}}
\newcommand{\dH}{d_\mathrm{H}}
\newcommand{\deK}{d_\mathrm{eK}}

\newcommand{\OmegaL}{\Omega_{\mathrm{L}}}
\newcommand{\OmegaU}{\Omega_{\mathrm{U}}}
\newcommand{\Lip}{\mathrm{Lip}}

\newcommand{\dotminus}{\mathbin{\ooalign{\hss\raise1ex\hbox{.}\hss\cr
  \mathsurround=0pt$-$}}}
\newcommand{\half}{\tfrac12}

\DeclareMathOperator{\qr}{qr} % quantifier rank
\DeclareMathOperator{\BC}{BC} % Baire class

% quantifier sup and inf, with more generous spacing
\DeclareMathOperator*{\qsup}{sup\,}
\DeclareMathOperator*{\qinf}{inf\,}

\begin{document}

\begin{abstract}
We develop an analogue of the classical Scott analysis for metric structures and infinitary continuous logic. Among our results are the existence of Scott sentences for metric structures and a version of the López-Escobar theorem. We also derive some descriptive set theoretic consequences: most notably, that isomorphism on a class of separable structures is a Borel equivalence relation iff their Scott rank is uniformly bounded below $\omega_1$. Finally, we apply our methods to study the Gromov--Hausdorff distance between metric spaces and the Kadets distance between Banach spaces, showing that the set of spaces with distance $0$ to a fixed space is a Borel set.
\end{abstract}

\maketitle

\setcounter{tocdepth}{1}
\tableofcontents

%%%%%%%%%%%%%%%%%%%%%%%%%%%%%%%%%%%%%%%%%%%%%%%%%%

\section{Introduction}
\label{sec:introduction}

Two of the foundational results in classical infinitary logic are Scott's theorem~\cite{Scott1965}, producing  for every countable structure a canonical sentence that uniquely describes it up to isomorphism, and the theorem of López-Escobar~\cite{Lopez-Escobar1965}, characterizing the isomorphism-invariant Borel sets of models. These results and the techniques developed around them, \emph{Scott analysis} (based on the back-and-forth method of Ehrenfeucht and Fraïssé) and the \emph{Vaught transforms} (introduced by Vaught~\cite{Vaught1974} to give a new proof of the López-Escobar theorem) have become a cornerstone of infinitary model theory as well as of the descriptive set theoretic study of the complexity of isomorphism of countable models. See, e.g., Gao~\cite{Gao2009a} for an exposition of the general theory, and Hjorth--Kechris~\cite{Hjorth1996} and Hjorth--Kechris--Louveau~\cite{Hjorth1998} for some more detailed results. The notion of \emph{Scott rank}, an ordinal that measures the model-theoretic complexity of structures, is also an indispensable tool.

The goal of this paper is to develop a parallel theory for infinitary continuous logic. In recent years, there has been a lot of activity in first order continuous logic (see \cite{BenYaacov2008} for a gentle introduction) and it turns out that much of the classical first order model theory extends to this setting, often with interesting twists. Perhaps more importantly, it seems that continuous logic is the ``correct'' setting for applying model-theoretic ideas to functional analysis and operator algebras, areas that have been hitherto far removed  from model theory (see, for example, \cite{Farah2014} and the references therein).

Some progress has been made towards the study of metric structures using classical infinitary logic (see, for example, \cite{Doucha2014}).
However, it seems that classical logic is too expressive in this setting and continuous logic is more appropriate for descriptive set theoretic applications. We discuss the connections of our approach with classical logic in Section~\ref{sec:classical-logic}.

An extended  form of continuous logic, called infinitary here,  allows  connectives to act on certain infinite collections of formulas. It  was introduced in~\cite{BenYaacov2009a}, where the authors obtain some applications to Banach space theory. Two other papers that focus on various versions of the omitting types theorem for   infinitary continuous  logic (that we also use here) are Caicedo and Iovino~\cite{Caicedo2014} and Eagle~\cite{Eagle2014}. However, their framework is somewhat more general (for example, they do not always require that structures be complete), while we keep the setting from \cite{BenYaacov2009a} as it seems to be the most relevant to  our purposes.

% In infinitary logic, one loses some of the pleasant features of classical model theory (most notably, compactness) but the increased expressive power of the language 

Our results are inspired by their classical counterparts but, as is usual in continuous logic, new difficulties and interesting phenomena appear that have no analogue in the classical setting. On a philosophical level, this is perhaps best explained by descriptive set theory: the isomorphism equivalence relation of classical countable models (which, in view of Scott's results, is one of the main objects of study) is strictly less complicated than isomorphism of separable metric structures. More precisely, the former is a universal orbit equivalence relation of an action of $S_\infty$ and the latter is (bi-reducible with) the universal orbit equivalence relation for Polish group actions \cite{Elliott2013}; by Hjorth's results \cite{Hjorth2000} on turbulence, the latter is strictly more complicated.

The basis for   classical Scott analysis is given by the back-and-forth equivalence relations $\equiv_\alpha$ (originally defined by Fraïssé) indexed by the countable ordinals~$\alpha$. These can be considered as Borel approximations of the analytic equivalence relation of isomorphism. The first novelty in the metric setting is that these equivalence relations are replaced by pseudo-distances $r_\alpha$ (i.e., distinct points can have distance $0$) that measure how different two tuples,  of the same length  and possibly coming from different structures, are. These pseudo-distances naturally give rise to equivalence relations $E_\alpha$: $\bar a \eqrel{E_\alpha} \bar b \iff r_\alpha(\bar a, \bar b) = 0$, and $E_\infty = \bigcap_\alpha E_\alpha$. The inductive definition is mostly uneventful, apart from the base case. For classical structures, two tuples are declared to be $\equiv_0$-equivalent if all quantifier-free formulas agree on them; in the metric situation, we would like to define $r_0$ as the supremum of the difference of the values that quantifier-free formulas take when evaluated on the tuples. This approach meets an immediate obstacle: the difference can be arbitrarily amplified by scaling formulas by a multiplicative constant. Thus we are led to consider formulas with a fixed modulus of continuity and in order to organize this, we introduce the notion of a \df{weak modulus of continuity} (denoted by $\Omega$) that controls what formulas we are allowed to use in the definition of $r_0$. It turns out that the weak modulus is an additional parameter in the construction that has no analogue in the classical setting;  by varying it one obtains different equivalence relations $E_\infty$ at the end.

The basic results of the general theory are independent of the choice of a weak modulus. Our first theorem is that the pseudo-distances $r_\alpha$ capture exactly the expressive power of formulas of quantifier rank at most $\alpha$.
\begin{theorem}
  \label{th:intro:rank}
  Let $A, B$ be metric structures in the same signature and let $\bar a \in A^n, \bar b \in B^n$. Then, for any $\alpha < \omega_1$,
  \begin{equation*}
    r_\alpha(A \bar a, B \bar b) = \sup_\phi |\phi^A(\bar a) - \phi^B(\bar b)|,
  \end{equation*}
  where the sup is taken over all $n$-ary $\Omega$-formulas of quantifier rank $\alpha$.
\end{theorem}

We also obtain analogues of Scott's results from the classical setting.
\begin{theorem}
  \label{th:intro-Scott}
  For every separable structure $A$, there exists a $\Lomo$-sentence $\sigma_A$ (the \df{Scott sentence} of $A$) such that for every separable structure $B$,
  \begin{equation*}
    A \eqrel{E_\infty} B \iff \sigma_A(B) = 0.
  \end{equation*}
  Moreover, the quantifier rank of $\sigma_A$ is equal to the Scott rank of $A$ plus $\omega$. In particular, the $E_\infty$-class of $A$ is Borel and its complexity is bounded by the Scott rank of $A$.
\end{theorem}

With a certain choice of the weak modulus $\Omega$ (which we call \df{universal}), one obtains isomorphism as $E_\infty$-equivalence, exactly as in the classical case. All separable metric structures for a fixed language can be seen as points in a Polish space,  as explained   in detail in Section~\ref{sec:space-polish-struct}. Applying Theorem~\ref{th:intro-Scott} tells us that isomorphism classes of separable metric structures are Borel (which is one of the classical applications of Scott sentences). This result, however, is not new: it can be deduced from \cite{Elliott2013}, where the authors prove that isomorphism is reducible to the orbit equivalence relation of a group action. Our approach, however, gives more detailed information and bounds for the Borel complexity of the equivalence class in terms of the Scott rank.

Our next theorem characterizes exactly when the isomorphism equivalence relation is Borel (again inspired by a similar result in the classical setting).
\begin{theorem}
  \label{th:intro:Borel-bounded}
  Let $\cong$ denote the isomorphism equivalence relation of separable structures (in a fixed signature) and let $X$ be an $\cong$-invariant Borel set of structures. Then the following are equivalent:
  \begin{enumerate}
  \item $\cong|_X$ is Borel;
  \item the supremum of the Scott ranks of the elements of $X$ is bounded below $\omega_1$.
  \end{enumerate}
\end{theorem}
In particular, this theorem provides a new method to show that certain isomorphism equivalence relations are not Borel, so long as one is able to calculate the Scott ranks.

An important connection between infinitary logic and descriptive set theory is provided by the López-Escobar theorem which asserts that the $\sigma$-algebra of Borel, $\cong$-invariant sets in the space of models coincides with the algebra of sets definable by $\Lomo$-sentences. We obtain an analogue of this theorem in the continuous setting.
\begin{theorem}
  \label{th:intro:LE}
  Let $U$ be a bounded, Borel function on the space of separable models that is invariant under isomorphism. Then there exists a $\Lomo$-sentence $\phi$ such that $U(A) = \phi^A$ for every structure $A$.
\end{theorem}
Our proof of this theorem is based on \df{Vaught transforms} \cite{Vaught1974}. However, as we do not have a group action readily available, we develop the transforms in a  different setting, better adapted to our situation.

A theorem similar to Theorem~\ref{th:intro:LE} was independently and simultaneously proved by Coskey and Lupini~\cite{Coskey2016}. The main difference between their approach and ours is that they only consider structures with  universe   the Urysohn sphere $\bU_1$. In that case, the equivalence relation of isomorphism is given by the action of $\Iso(\bU_1)$ and Vaught transforms can be used directly.

With a different natural choice of a weak modulus (the \df{$1$-Lipschitz} one), the  pseudo-distance $r_\infty = \sup_\alpha r_\alpha$ defines a coarser equivalence relation than isomorphism and specializes to Gromov--Hausdorff distance for metric spaces and Kadets distance for Banach spaces. This approach to the Gromov--Hausdorff distance has the advantage that it does not require embeddings into a third structure and is defined for arbitrary metric structures, even ones that do not have amalgamation (and where the original definition is not applicable). Combining this with our general results, we obtain the following.
\begin{cor}
  \label{c:intro:GH-Borel}
  For a  Polish metric space $A$,  the set of Polish metric spaces such that the  Gromov--Hausdorff distance to $A$ is $0$ is Borel. A similar fact holds  for Banach spaces and the Kadets distance.
\end{cor}

Finally, generalizing a theorem of Gao~\cite{Gao1998} from the classical setting, we characterize the separable structures with a  Scott sentence that  has only separable models (that is, it is \df{absolutely categorical}).
\begin{theorem}
  \label{th:intro:Gao}
  Let $A$ be a separable metric structure. Then the following are equivalent:
  \begin{enumerate}
  \item The Scott sentence of $A$ only has separable models;
  \item The left uniformity of the group $\Aut(A)$ is complete.
  \end{enumerate}
\end{theorem}

\medskip \noindent \textbf{Acknowledgements.} We would like to thank Christian Rosendal for some useful discussions and Maciej Malicki for providing a reference. Part of the work on this project was carried out at Research Centre Coromandel in New Zealand. Continuing work   was carried out at the Erwin Schrödinger Institute in Vienna during the thematic program on \emph{Measured group theory};  we are grateful to the Institute and the organizers for the excellent working conditions they provided. This research was partially supported by the Marsden fund of New Zealand 13-UOA-287, the ANR project GrupoLoco (ANR-11-JS01-008), the ANR project GAMME (ANR-14-CE25-0004), the ERC grant ANALYTIC (no. 259527), and the Franche--Comt\'e region.

We are also grateful to the anonymous referee for a careful reading of the paper and some useful suggestions.

%%%%%%%%%%%%%%%%%%%%%%%%%%%%%%%%%%%%%%%%%%%%%%%%%% 

\section{Continuous infinitary logic}
\label{sec:cont-infin-logic}

\subsection{Moduli of continuity}
\label{ssec:moduli-continuity}

An important feature of classical $\Lomo$-logic is that when one forms an infinite conjunction (or disjunction) of the formulas $\set{\phi_i(\bar x) : i \in \N}$, all $\phi_i$ share the same finite set of free variables $\bar x$; as a result, every $\Lomo$-formula has only finitely many free variables. The analogous uniformity condition in infinitary continuous logic is ensured by mandating that all $\phi_i$ obey the same continuity modulus. This ensures that the interpretations of all formulas are uniformly continuous functions (with a modulus that can be determined syntactically). To formalize this, we start with several basic definitions and facts about moduli of continuity.

\begin{defn}
  \label{dfn:Modulus}
  Let $n$ be a natural number or $\N$. A \emph{modulus of arity $n$} is a function $\Delta \colon [0,\infty)^n \rightarrow [0,\infty)$ that is:
  \begin{enumerate}
  \item \label{item:Modulus1}
    non-decreasing, subadditive, vanishing at zero:
    \begin{equation*}
      \Delta(\delta) \leq \Delta(\delta + \delta') \leq \Delta(\delta) + \Delta(\delta'), \qquad \Delta(0) = 0;
    \end{equation*}
    
  \item continuous.
  \end{enumerate}

  A \df{weak modulus} is a function $\Omega \colon [0, \infty)^\N \to [0, \infty]$ that satisfies \ref{item:Modulus1} and is

  \begin{itemize}
  \item[(ii')] lower semi-continuous in the product topology and separately continuous in each argument.
  \end{itemize}
\end{defn}

The main use of a modulus is to measure the uniform continuity of a function defined on a product of finitely many metric spaces.
\begin{defn}
  Let $\Delta$ be an $n$-ary modulus, and let $X = \prod_{i < n} X_i$ be a product of metric spaces.
  On $X^2$, define
  \begin{equation*}
    d^\Delta(x,y) = \Delta \bigl(d^{X_i}(x_i,y_i) : i \in n \bigr).
  \end{equation*}
  If $Z$ is another metric space and $f\colon X \rightarrow Z$ is a map, we say that $f$ \df{respects} (or \df{obeys}) $\Delta$ if for all $x, y \in X$, we have
  \begin{equation*}
    d_Z( f(x), f(y) ) \leq d^\Delta(x, y).
  \end{equation*}
\end{defn}

The conditions in the definition of a modulus are chosen in such a way that $d^\Delta$ is a continuous pseudo-distance on any product of metric spaces. If $\Delta$ is moreover \df{faithful}, i.e., $\Delta(\delta) = 0$  implies that $\delta = 0$, then $d^\Delta$ is a distance compatible with the product uniform structure.

If $K \sub \R^n$ is a product of compact intervals and $f \colon K \to \R$ is a continuous function, we define its modulus of continuity $\Delta_f$ by
\begin{equation}
  \label{eq:ModulusFunction}
  \Delta_f(\delta) = \sup \set{|f(x) - f(y)| : x, y \in K, |x_i - y_i| \leq \delta_i}.
\end{equation}
$\Delta_f$ is the least modulus of continuity that $f$ obeys.

The main purpose of weak moduli is to control the uniform continuity of formulas and organize together an infinite collection of moduli of different arities. A weak modulus will never be used directly but rather via its traces on finite products. If $\Omega \colon [0, \infty)^\N \to [0, \infty]$ is a weak modulus and $n \in \N$, define the \emph{truncation} $\Omega|_n \colon [0, \infty)^n \to [0, \infty]$ by
\begin{equation*}
  \Omega|_n(\delta_0, \ldots, \delta_{n-1}) = \Omega(\delta_0, \ldots, \delta_{n-1}, 0, 0,  \ldots).
\end{equation*}

A weak modulus $\Omega$ is \emph{shift-increasing} if for every sequence $i_0 < i_1 < \cdots$ of natural numbers and every $\delta \in [0, \infty)^\N$, we have $\Omega(\delta) \leq \Omega(\delta')$, where $\delta'_{i_p} = \delta_p$ and $\delta'_k = 0$ if $k \notin \set{i_0, i_1, \ldots}$. All natural weak moduli that we have in mind satisfy this condition; however, it is only used in one place in the general theory (Proposition~\ref{p:BackAndForth}) and we have preferred to keep it as a separate hypothesis where necessary rather than make it part of the definition of a weak modulus.

The following lemma clarifies the connection between weak moduli and moduli.
\begin{lemma}
  \label{l:moduli-properties}
  Let $\Omega \colon [0, \infty)^\N \to [0, \infty]$ be a weak modulus. Then all truncations of $\Omega$ are moduli and $\Omega$ is determined by its truncations:
  \begin{equation}
    \label{eq:l:moduli-properties}
      \Omega(\delta_0,\delta_1, \ldots) = \sup_n \Omega|_n(\delta_0,\ldots,\delta_{n-1}).
    \end{equation} 
\end{lemma}
\begin{proof}
  Since $\Omega|_n$ is lower semi-continuous, it suffices to show that it is upper semi-continuous. Fix $\delta \in [0, \infty)^n$ and $s \in \R$ such that $\Omega|_n(\delta) < s$. Using that $\Omega|_n$ is separately continuous in each variable, find consecutively $\gamma_0, \ldots, \gamma_{n-1}$ such that
  \begin{equation*}
    \Omega|_n(\delta) \leq \Omega|_n(\delta_0 + \gamma_0, \delta_1, \ldots, \delta_{n-1}) \leq \cdots \leq \Omega|_n(\delta_0 + \gamma_0, \ldots, \delta_{n-1} + \gamma_{n-1}) < s.
  \end{equation*}
  As $\Omega|_n$ is monotone, this shows that $\set{\delta \in [0, \infty)^n : \Omega|_n(\delta) < s}$ is open, completing the proof.

  For \eqref{eq:l:moduli-properties}, the inequality $\geq$ follows from the monotonicity and $\leq$ follows from lower semi-continuity.
\end{proof}

Thus, given a metric space $(X, d)$, each weak modulus $\Omega$ defines a family of distances $d^{\Omega|_n}$ on powers of $X$; we will often abuse notation and write $d^\Omega$ instead when $n$ is clear from the context. Similarly, we will say that a function $f \colon X^n \to \R$ respects $\Omega$ rather than that it respects $\Omega|_n$.

Two examples of weak moduli that will be important for us are the following. The \emph{$1$-Lipschitz weak modulus} $\OmegaL \colon [0, \infty)^\N \to [0, \infty]$ is defined by
\begin{equation}
  \label{eq:OmegaL}
  \OmegaL(\delta) = \sup_i \delta_i, \quad \text{where } \delta = (\delta_0, \delta_1, \ldots).
\end{equation}

The \emph{universal weak modulus for Lipschitz languages} $\OmegaU(\Lip) \colon [0, \infty)^\N \to [0, \infty]$ is defined by
\begin{equation}
  \label{eq:OmegaULip}
  \OmegaU(\Lip)(\delta) = \sum_{i = 0}^\infty i \cdot \delta_i.
\end{equation}
Both of those weak moduli are shift-increasing.

%%%%%%%%%%%%%%%%%%%%%%%%%%%%%%%%%%%%%%%%%%%%%%%%%%

\subsection{Infinitary logic}
\label{sec:infinitary-logic}

Continuous infinitary logic was first introduced by Ben Yaacov and Iovino in \cite{BenYaacov2009a}. The definitions we give here are compatible with theirs.

A \emph{metric language} (or signature) is a collection $L$ of symbols.
For each symbol $s \in L$, the language also determines its \emph{kind} (function or predicate), its arity (a natural number $n_s$), a $n_s$-ary modulus of continuity $\Delta_s$, and, for predicates, a compact interval $I_s \sub \R$ of allowed values for $s$ that we will refer to as a \df{bound}. The language always contains, implicitly, a binary predicate symbol $d$ with $\Delta_d(\delta_1,\delta_2) = \delta_1 + \delta_2$. The bound for $d$ is determined by the language.

An \df{$L$-structure} $A$ is a complete metric space equipped with interpretations of the symbols:
\begin{itemize}
\item Each function symbol $F$ is interpreted by a map $F^A\colon A^{n_F} \rightarrow A$ respecting the modulus $\Delta_F$;
  
\item Each predicate symbol $P$ is interpreted by a function $P^A\colon A^{n_P} \rightarrow \R$ respecting the modulus $\Delta_P$ and bound $I_P$ (i.e., $P^A(\bar a) \in I_P$ for all $\bar a \in A^{n_P}$);
  
\item The symbol $d$ is always interpreted by the distance. It must respect the bound $I_d$.
\end{itemize}

Let $L$ be a metric signature. The logic $\Lomo(L)$ is defined as follows. First, we fix a family $\set{x_i : i \in \N}$ of distinct \emph{variable symbols}. The syntactic objects of the logic are terms and formulas; terms come equipped with a modulus of continuity that they respect and formulas have a modulus of continuity and a bound. 

Terms, atomic formulas, and \df{basic formulas} are constructed inductively as follows.
\begin{itemize}
\item Each $x_i$ is a term that respects the $\bN$-ary modulus $\Delta_{x_i}(\delta) = \delta_i$;
  
\item If $\tau_i$, $i < n$ are terms and $F$ is a function symbol of arity $n$, then $\sigma = F(\bar \tau)$ is a term that respects $\Delta_\sigma = \Delta_F \circ ( \Delta_{\tau_i} : i < n)$;

\item If $P$ a predicate symbol of arity $n$ and $\bar \tau$ are terms, then $\phi = P(\bar \tau)$ is an \emph{atomic formula} that respects the modulus $\Delta_\phi = \Delta_P \circ ( \Delta_{\tau_i} : i < n)$ and the bound $I_P$;

\item If $\set{\phi_i : i < n}$ are atomic formulas with moduli of continuity $\Delta_{\phi_i}$ and bounds $I_{\phi_i}$, and $f\colon \prod_i I_{\phi_i} \rightarrow \R$ is continuous, then $\psi = f(\bar \phi)$ is a \df{basic formula} that respects the modulus $\Delta_\psi = \Delta_f \circ (\Delta_{\phi_i} : i < n)$, where $\Delta_f$ is as per \eqref{eq:ModulusFunction}, and the bound $I_\phi = f(\prod_i I_{\phi_i})$.
\end{itemize}

Next, we define general $\Lomo$-formulas starting from atomic formulas and combining them using finitary connectives, quantifiers, and countable infima and suprema (also called \df{infinitary connectives}). As before, every formula $\phi$ respects some modulus of continuity $\Delta_\phi$ and a bound $I_\phi$. If $\phi$ respects $\Delta$ and $I$, and $\Delta' \geq \Delta$, $I' \supseteq I$, then we will also say that $\phi$ respects $\Delta'$ and $I'$. 
\begin{itemize}
\item Every atomic formula is a formula;
  
\item If $\phi_0, \ldots, \phi_{n-1}$ are formulas that respect $\Delta_{\phi_0}, \ldots, \Delta_{\phi_{n-1}}$ and $I_{\phi_0}, \ldots, I_{\phi_{n-1}}$ and $f \colon \prod_i I_{\phi_i}  \to \R$ is a continuous function, then $f(\phi_0, \ldots, \phi_{n-1})$ is a formula that respects $\Delta_f \circ (\Delta_{\phi_i} : i < n)$ and $f(\prod_i I_{\phi_i})$.

\item If $\phi$ is a formula that respects $\Delta$ and $I$ and $i \in \N$, then $\sup_{x_i} \phi$ and $\inf_{x_i} \phi$ are formulas that respect $\hat \Delta$ and $I$, where
  \begin{equation*}
    \hat \Delta(\delta_0, \ldots, \delta_n) = \Delta(\delta_0, \ldots, \delta_{i-1}, 0, \delta_{i+1}, \ldots, \delta_n).
  \end{equation*}

\item If $\set{\phi_i : i \in \N}$ are formulas, $\Delta$ is a modulus, and $I$ is a bound such that each $\phi_i$ respects $\Delta$ and $I$ then $\bigvee_i \phi_i$, $\bigwedge_i \phi_i$ are also formulas that respect $\Delta$ and $I$. $\bigvee_i \phi_i$ is interpreted as $\sup_i \phi_i$ and $\bigwedge \phi_i$ is interpreted as $\inf_i \phi_i$.
\end{itemize}
The \df{finitary fragment $\Lfin$} is defined as the set of all $\Lomo$-formulas, where the infinitary connectives (the last item above) are not used. Two frequent binary connectives  are $\wedge$ ($\min$) and $\vee$ ($\max$).

Finally, we   need the notion of \df{$(\Omega, I)$-formulas} for some given weak modulus $\Omega$ and bound $I \sub \R$. This definition is more restrictive than that of general formulas in several ways: first, we require that all ($\Omega, I$)-formulas respect $\Omega$ and $I$; second, we only allow $1$-Lipschitz connectives in the inductive definition; and third, we keep track of the variables used and quantifiers are allowed only in a certain order. The last restriction is needed when we compute quantifier ranks. Here, the base of the inductive construction are the basic rather than the atomic formulas; that is, we allow applying an arbitrary connective in the beginning. The formal definition of \df{an $n$-ary $(\Omega, I)$-formula} is by induction as follows.

\begin{itemize}
\item All basic formulas $\phi(x_0, \ldots, x_{n-1})$ that only depend on the first $n$ variables and respect $\Omega$ and $I$ are $n$-ary $(\Omega, I)$-formulas.
  
\item If $\set{\phi_i : i \in \N}$ are $n$-ary $(\Omega, I)$-formulas, then $\bigvee_i \phi_i$ and $\bigwedge_i \phi_i$ are $n$-ary $(\Omega, I)$-formulas.
  
\item If $\phi$ is an $(n+1)$-ary $(\Omega, I)$-formula, then $\inf_{x_n} \, \phi$ and $\sup_{x_n} \, \phi$ are $n$-ary $(\Omega, I)$-formulas.
  
\item If $\phi_0, \ldots, \phi_{k-1}$ are $n$-ary $(\Omega, I)$-formulas and $f \colon \R^k \to \R$ is a $1$-Lipschitz function (for the $\max$ distance on $\R^k$), then $f(\phi_0, \ldots, \phi_{k-1})$ is a $n$-ary $(\Omega, f(I^k))$-formula.
  
\item An $n$-ary $\Omega$-formula is an $n$-ary $(\Omega, I)$-formula for some $I$. An \df{$\Omega$-formula} is an $n$-ary $\Omega$-formula for some $n$. An \df{$\Omega$-sentence} is a $0$-ary $\Omega$-formula.
\end{itemize}

Note that an $n$-ary $(\Omega, I)$-formula automatically respects the modulus $\Omega|_n$ and the bound $I$. Thus the collection of all $n$-ary ($\Omega, I$)-formulas is equicontinuous and uniformly bounded; in particular, we do not need any further equicontinuity and boundedness requirements in the second item of the definition. Note, however, that, as $\Omega$ is not required to be symmetric, our variables are not necessarily interchangeable. If, however, $\Omega$ is symmetric (as is the case with $\OmegaL$), then we can quantify over any variable and not only over the one with the largest index. The condition that $\Omega$ is shift-increasing translates into the fact that we are allowed to substitute variables with bigger indices for free variables in formulas: if $\phi(x_0, \ldots, x_n)$ is an $\Omega$-formula and $i_0 < i_1 < \cdots < i_{n-1}$, then $\phi(x_{i_0}, \ldots, x_{i_n})$ is also an $\Omega$-formula. This property turns out to be very convenient when one tries to write actual formulas.

The notion of an $\Omega$-formula becomes more permissive as $\Omega$ becomes larger; we will see later (Corollary~\ref{c:universal-true}) that for a certain choice of $\Omega$, every $\Lomo$-sentence is equivalent to an $\Omega$-sentence. On the other hand, the expressive power of $\OmegaL$-formulas is strictly weaker than that of full $\Lomo$-logic (see Section~\ref{sec:some-examples}). This distinction is purely an infinitary phenomenon: if one restricts to $\Lfin$, then it follows from \cite{BenYaacov2013b}*{Corollary~1.7} that any formula can be uniformly approximated by a Lipschitz formula and thus the values of $1$-Lipschitz formulas completely determine the values of all $\Lfin$-formulas.

Terms and formulas in $\Lomo(L)$ can naturally be interpreted in any $L$-structure $A$: every term $\sigma$ is interpreted as a function $\sigma^A \colon A^\N \to A$ and every formula $\phi$ is interpreted as a function $\phi^A \colon A^\N \to \R$ that obeys its modulus and bound. If $\phi$ is an $n$-ary formula, then it only depends on the first $n$ variables, so its interpretation can be considered as a function $A^n \to \R$. Sometimes we will write $A \models (\phi = r)$ instead of $\phi^A = r$.

Formulas of fixed arity $n$ are naturally equipped with a seminorm defined as follows:
\begin{equation}
  \label{eq:norm-formulas}
  \nm{\phi} = \sup \set{|\phi(\bar a)| : A \text{ is a structure and } \bar a \in A^n}.
\end{equation}
The norm $\nm{\phi}$ is always finite because an interpretation of a formula is required to obey its bound $I_\phi$. The following basic fact will be needed later.
\begin{lemma}
  \label{l:formulas-dens-char}
  Let $\kappa$ be an infinite cardinal. If the language $L$ has size at most $\kappa$, then the space of $\Lfin$-formulas, equipped with the norm given by \eqref{eq:norm-formulas} has density character at most $\kappa$.
\end{lemma}
\begin{proof}[Proof sketch]
  The only possible problem is that we allow arbitrary continuous functions as connectives and there are uncountably many of them. However, as the space of continuous functions defined on a \emph{compact subset of $\R^n$} is separable in the uniform norm, we can use a countable collection of connectives and thus obtain a dense set of size $\kappa$. (In fact, it is possible to use only finitely many connectives.)
\end{proof}
Note, however, that the space of (even quantifier-free) $\Lomo$-formulas has density character $2^{|L|}$.

\begin{remark}
  Our framework also allows us to treat unbounded predicates. If $P$ is an unbounded predicate, we replace it with an infinite family $\set{P_n : n \in \N}$ of predicate symbols interpreted as $P_n = \min(P, n)$. In the special case where the distance $d$ is unbounded, we take $d_1$ to be the ``official'' distance required by the language. This does not change much as $d$ and $d_1$ are uniformly equivalent. Note also that isomorphism is preserved by this procedure.
\end{remark}

%%%%%%%%%%%%%%%%%%%%%%%%%%%%%%%%%%%%%%%%%%%%%%%%%%

\section{The back-and-forth hierarchy and Scott ranks}
\label{sec:scott-hierarchy}

\subsection{The back-and-forth pseudo-distances}
\label{sec:back-forth-pseudo}

Throughout, we fix a signature $L$ and a weak modulus $\Omega \colon [0,\infty)^\bN \rightarrow [0,\infty]$.

The $r_\alpha$ pseudo-distances that we define in this subsection are the continuous analogue of the back-and-forth equivalence relations for classical structures. Note that $r_\alpha$ take values in $[0, \infty]$.
\begin{defn}
  \label{dfn:ScottDistance}
  Let $\alpha$ be an ordinal or the symbol $\infty$ greater than all ordinals.
  Let $n \in \bN$, let $A$ and $B$ be structures and let $\bar a \in A^n$, $\bar b \in B^n$.
  We define the \emph{back-and-forth pseudo-distance (of rank $\alpha$ and arity $n$, with respect to $\Omega$)}, denoted by $r_{\alpha,n}^{A,B,\Omega}(\bar a,\bar b)$ (or simply $r^{A,B}_\alpha(\bar a,\bar b)$) by induction on $\alpha$ as follows.
  For $\alpha = 0$, we set
  \begin{equation*}
    r_0^{A,B}(\bar a, \bar b) = \sup_\phi \, \bigl| \phi^A(\bar a) - \phi^B(\bar b) \bigr|,
  \end{equation*}
  where $\phi$ varies over all basic $n$-ary $\Omega$-formulas.
  For $\alpha$ limit (or $\infty$),
  \begin{equation*}
    r^{A,B}_\alpha(\bar a, \bar b) = \sup_{\beta<\alpha} \, r^{A,B}_\beta(\bar a, \bar b).
  \end{equation*}
  Finally, for the successor step,
  \begin{equation*}
    r^{A,B}_{\alpha+1}(\bar a, \bar b) = \sup_{c \in A, \, d \in B} \, \inf_{c' \in A, \, d' \in B} \, r^{A,B}_\alpha(\bar ac, \bar bd') \vee r^{A,B}_\alpha(\bar ac', \bar bd).
  \end{equation*}
  We may also write $r_{\alpha,n}(A\bar a,B\bar b)$ instead of $r^{A,B}_{\alpha,n}(\bar a,\bar b)$, allowing $A$ and $B$ to vary together with $\bar a$ and $\bar b$. In case $n = 0$, we write just $r_\alpha(A, B)$.
\end{defn}

For the rest of this section, fix a signature $L$ and a weak modulus $\Omega$.
\begin{lemma} \mbox{}
   
  \label{lem:ScottDistance}
  \begin{enumerate}
  \item \label{i:lem:ScottDistance-1} For fixed $\alpha$ and $n$, $r_{\alpha, n}$ is a pseudo-distance on the class of all pairs $A\bar a$.
    
  \item \label{i:lem:ScottDistance-2} For every $\alpha$, $A$ and $\bar a, \bar b \in A^n$, we have $r^{A,A}_\alpha(\bar a, \bar b) \leq d^{\Omega}(\bar a, \bar b)$.
    
  \item \label{i:lem:ScottDistance-3} For fixed $\alpha$, $n$, $A$, and $B$, the function $r^{A,B}_\alpha$ is uniformly continuous on $A^n \times B^n$, respecting the modulus $\Omega|_n$ on each side. In particular, if $r_\alpha(A \bar a, B \bar b) < \infty$ for some $\bar a \in A^n$, $\bar b \in B^n$, then $r_\alpha(A \bar c, B \bar d) < \infty$ for all $\bar c \in A^n$, $\bar d \in B^n$.
  \end{enumerate}
\end{lemma}
\begin{proof}
  All three items are proved by induction on $\alpha$.
  
  \ref{i:lem:ScottDistance-1} The only non-obvious property is the triangle inequality.
  For $\alpha = 0$ and $\alpha$ limit, this is easy.
  For the successor step, assume that $r_{\alpha+1}(A\bar a,B\bar b) < s$ and $r_{\alpha+1}(B\bar b,C\bar c) < t$ in order to show that $r_{\alpha+1}(A\bar a,C\bar c) \leq s+t$.
  Fix $d \in A$.
  Since $s > r_{\alpha+1}(A\bar a,B\bar b)$, there exists $e \in B$ such that $r_\alpha(A\bar ad,B\bar be) < s$.
  Similarly, there exists $f$ such that $r_\alpha(B\bar be,\bar cf) < t$.
  By the induction hypothesis, $r_\alpha(A\bar ad,C\bar cf) < s+t$.
  Similarly, for all $f$ there exists $d$ such that the same holds, so $r_{\alpha+1}(A\bar a,C\bar c) \leq s+t$ and we are done.

  \ref{i:lem:ScottDistance-2} For $\alpha = 0$, this holds since the interpretation of an $n$-ary $\Omega$-formula respects $\Omega|_n$. For limit steps, this is clear, and at the successor step, take $d' = c$ and $c' = d$ and note that by the definition of $d^\Omega$, $d^\Omega(\bar a c, \bar b c) = d^\Omega(\bar a, \bar b)$.

  \ref{i:lem:ScottDistance-3} follows from \ref{i:lem:ScottDistance-1} and \ref{i:lem:ScottDistance-2}.
\end{proof}

The next lemma shows that the $r_\alpha$ stabilize at a certain point.
\begin{lemma}
  \label{lem:IncreasingScottDistance}
  The following statements hold:
  \begin{enumerate}
  \item \label{item:Increasing} If $\beta < \alpha$ then $r_\beta\leq r_\alpha$ (i.e., $r_{\beta,n} \leq r_{\alpha,n}$ for all $n$);
    
  \item \label{item:IncreasingScottDistanceStabilise}
    If $\kappa$ is an infinite cardinal and $A$ and $B$ are structures of density character at most $\kappa$, then there exists $\alpha < \kappa^+$ such that $r^{A,B}_{\alpha+1} = r^{A,B}_\alpha$.
    Moreover, in this case, the sequence of $r^{A,B}$ stabilizes beyond $\alpha$, i.e., $r^{A,B}_\infty = r^{A,B}_\alpha$.
    
  % \item If $\Omega$ is shift-increasing, then $r_\alpha(\bar ac, \bar bd) \geq r_\alpha(\bar a, \bar b)$. ???? 
  \end{enumerate}
\end{lemma}
\begin{proof} \ref{item:Increasing} We argue by induction on $\alpha$.
  For $\alpha = 0$ and $\alpha$ limit, there is nothing to show.
  We now prove the statement for $\alpha + 1$ assuming that it holds for $\alpha$. By the induction hypothesis, it will suffice to show that $r_\alpha \leq r_{\alpha+1}$, which we do by distinguishing different cases.

  If $\alpha = 0$, then $r_0 \leq r_1$ because a formula $\phi(x_0, \ldots x_{n-1}, x_n)$ that respects $\Omega|_{n+1}$ and does not depend on $x_n$ also respects $\Omega|_n$.
  On the other hand, if $\beta < \alpha$, then $r_\beta \leq r_\alpha$ by the induction hypothesis, so $r_{\beta+1} \leq r_{\alpha+1}$.
  From this, for both $\alpha$ limit and $\alpha$ successor, we deduce that $r_\alpha \leq r_{\alpha+1}$.

  \ref{item:IncreasingScottDistanceStabilise} For $\beta < \kappa^+$, $q \in \Q$, and $n \in \N$, let
  \begin{equation*}
    U_{\beta, q, n} = \set{(\bar a, \bar b) \in A^n \times B^n : r_\beta(\bar a, \bar b) > q}.
  \end{equation*}
  If we keep $q$ and $n$ fixed, $\set{U_{\beta, q, n} : \beta < \kappa^+}$ is an increasing sequence of open sets in the space $A^n \times B^n$ which has weight $\kappa$; therefore the sequence must stabilize at some $\beta(q, n) < \kappa^+$. Finally, set $\alpha = \sup \set{\beta(q, n) : q \in \Q, n \in \N}$.
\end{proof}

The pseudo-distances $r_\alpha$ define naturally equivalence relations $E_\alpha$:
\begin{equation*}
  A \bar a \eqrel{E_\alpha} B \bar b \iff r_\alpha(A \bar a, B \bar b) = 0.
\end{equation*}
In view of Lemma~\ref{lem:IncreasingScottDistance}, we naturally have that $E_\alpha \supseteq E_{\alpha+1}$ and $E_\infty = \bigcap_\alpha E_\alpha$.

If $A$ is a separable structure, say that the sequence $(a_i)_{i \in \N}$ of elements of $A$ is \df{tail-dense} if for every $k$, $\set{a_i : i > k}$ is dense in $A$. It is easy to see that a sequence is tail-dense iff it is dense and it hits every isolated point of $A$ infinitely many times. The following is the key back-and-forth fact that will be used throughout the paper.
\begin{prop}
  \label{p:BackAndForth}
  Suppose that $\Omega$ is shift-increasing. Let $A$ and $B$ be separable structures and let $\bar v \in A^k$, $\bar w \in B^k$. Then we have that $r_\infty(A \bar v, B \bar w) < t$ if and only if there exist tail-dense sequences $a \in A^\bN$ and $b \in B^\bN$ such that $a|_k = \bar v$, $b|_k = \bar w$, and
  \begin{equation*}
    \sup_n r^{A,B}_{0,n}(a|_n, b|_n) < t.
  \end{equation*}
\end{prop}
\begin{proof}
We start with the ``only if'' part.  Let $\cB_A$ be a countable basis for $A$ and let $(U^A_n : n \in \N)$ be a sequence of open sets such that every element of $\cB_A$ appears infinitely often; similarly, define  $(U^B_n : n \in \N)$ for $B$.
  
  We construct the desired sequences by a back-and-forth argument; we only describe the ``forth'' step. Let $\bar a = (a_0, \ldots, a_{n-1})$ and $\bar b = (b_0, \ldots b_{n-1})$ be given (for some even $n \geq k$) and suppose that $r_\infty(A\bar a, B\bar b) < t' < t$. Let $a_n$ be an arbitrary element of $U^A_{n/2}$. We are looking for $b_n \in B$ such that $r_\infty(A\bar a a_n, B\bar b b_n) < t'$. By Lemma~\ref{lem:IncreasingScottDistance}, there exists $\alpha = \alpha_{A, B}$, so that $r_\infty^{A, B} = r_\alpha^{A, B} = r_{\alpha+1}^{A, B}$. We have
  \begin{equation*}
    \begin{split}
      t' &> r_\alpha(A\bar a, B\bar b) \\
      &= r_{\alpha+1}(A\bar a, B\bar b) \\
      &\geq \qsup_{c \in A} \qinf_{d' \in B} r_\alpha(A\bar ac, B\bar bd') \\
      &\geq \qinf_{d' \in B} r_\alpha(A\bar a a_n, B\bar bd').
    \end{split}
  \end{equation*}
  We obtain that there exists $b_n \in B$ such that $r_\alpha(A\bar a a_n, B\bar b b_n) < t'$, which allows us to continue. The fact that $a_{2n} \in U^A_n$ and $b_{2n+1} \in U^B_n$ for every $n$ ensures that both sequences are tail-dense. In the end, we have
  \begin{equation*}
    \sup_n r_0(a|_n, b|_n) \leq \sup_n r_\infty(a|_n, b|_n) \leq t' < t,
  \end{equation*}
  and we are done.

  Conversely, suppose we are given sequences $\bar a$ and $\bar b$ with
  \begin{equation*}
    \sup_n r^{A,B}_{0,n}(\bar a|_n, \bar b|_n) < t' < t.
  \end{equation*}
  We show by induction on $\alpha$ that for any $i_0 < \cdots < i_{n-1}$ and all $\alpha$, we have
  \begin{equation}
    \label{eq:back-forth-1}
    r_\alpha(Aa_{i_0}\ldots a_{i_{n-1}}, Bb_{i_0}\ldots b_{i_{n-1}}) \leq t'.
  \end{equation}
First consider the case that $\alpha=0$ and take some $i_0 < \cdots < i_{n-1}$. Then we have
\begin{equation*}
  r_0(Aa_{i_0}a_{i_1}\ldots a_{i_{n-1}}, Bb_{i_0}b_{i_1} \ldots b_{i_{n-1}}) \leq r_0(Aa_0 a_1 \ldots a_{i_{n-1}}, Bb_0b_1\ldots b_{i_{n-1}}) < t,
\end{equation*}
where the fist inequality follows from the fact that $\Omega$ is shift-increasing and the second from the assumption.

Suppose now that $\alpha=\beta+1$ for some $\beta\geq 0$ for which \eqref{eq:back-forth-1} has been proved. Fix again some $i_0 < \cdots < i_{n-1}$. Then we have
\begin{equation*}
  \begin{split}
    r_{\beta+1}(Aa_{i_0}\ldots a_{i_{n-1}}, Bb_{i_0}\ldots b_{i_{n-1}}) &\leq \limsup_{m \to \infty} r_\beta(Aa_{i_0}\ldots a_{i_{n-1}}a_m, Bb_{i_0}\ldots b_{i_{n-1}}b_m) \\
    & \leq t'.
  \end{split}
\end{equation*}
Indeed, to see why the first inequality holds, suppose that the right-hand side is smaller than $s$. Let $\eps > 0$. Fix $c \in A$ and let $m_k \to \infty$ be such that $a_{m_k} \to c$ and $r_\beta(Aa_{i_0}\ldots a_{i_{n-1}}a_{m_k}, Bb_{i_0}\ldots b_{i_{n-1}}b_{m_k}) < s+\eps$ for all $k$. Now taking $k$ big enough so that $r_\beta(Aa_{i_0}\ldots a_{i_{n-1}}c, Aa_{i_0}\ldots a_{i_{n-1}}a_{m_k}) < \eps$ (which exists because $r_\beta$ is contractive in $d^\Omega$ by Lemma~\ref{lem:ScottDistance}), shows that
\begin{equation*}
  \inf_d r_\beta(Aa_{i_0}\ldots a_{i_{n-1}}a_{m_k}, Bb_{i_0}\ldots b_{i_{n-1}}d) < s+2\eps.
\end{equation*}
The other term in the inductive definition of $r_{\beta+1}$ is treated in a similar way.

The second inequality follows from the inductive hypothesis. The limit case is trivial. This completes the induction and the proof of the proposition.
\end{proof}

Note that the assumption that $\Omega$ is shift-increasing is only used in the ``if'' direction of the proposition.

%%%%%%%%%%%%%%%%%%%%%%%%%%%%%%%%%%%%%%%%%%%%%%%%%%

\subsection{Quantifier rank}
\label{sec:quantifier-rank}

The \df{quantifier rank} of a formula $\phi$, denoted by $\qr \phi$, is defined by induction as follows:
\begin{itemize}
\item $\qr \phi = 0$ if $\phi$ is an atomic formula;
\item $\qr f(\phi_0, \ldots, \phi_{n-1}) = \max_i \qr \phi_i$ if $f$ is a connective;
\item $\qr \bigvee_i \phi_i = \qr \bigwedge_i \phi_i = \sup_i \qr \phi_i$;
\item $\qr (\sup_x \phi) = \qr (\inf_x \phi) = \qr \phi + 1$.
\end{itemize}

The following theorem tells us that, as in the classical case, the distance $r_\alpha$ captures exactly the expressive power of the $\Omega$-formulas of quantifier rank at most $\alpha$.
\begin{theorem}
  \label{th:r_alpha-qr}
  Let $\alpha$ be an ordinal, $A,B \in \cM$, $\bar a \in A^n$ and $\bar b \in B^n$.
  Then
  \begin{equation} \label{eq:ralpha-qr}
    r^{A,B}_\alpha(\bar a, \bar b) = \sup_\phi \, \bigl| \phi^A(\bar a) - \phi^B(\bar b) \bigr|,
  \end{equation}
  where $\phi$ varies over all $n$-ary $\Omega$-formulas of quantifier rank at most $\alpha$.
\end{theorem}
\begin{proof}
  We prove by induction on $\alpha$ that for all bounds $I$,
  \begin{equation*}
    r^{A,B}_\alpha(\bar a, \bar b) \wedge |I| = \sup_\phi \, \bigl| \phi^A(\bar a) - \phi^B(\bar b) \bigr|,
  \end{equation*}
  where $|I|$ denotes the length of $I$ and $\phi$ varies over all $n$-ary $(\Omega, I)$-formulas of quantifier rank at most $\alpha$. For $\alpha = 0$ and limit this is by definition, so assume this for $\alpha$ and let us prove it for $\alpha+1$. For simplicity, suppose that $\min I = 0$.

  Fix $t > 0$, and assume that $r^{A,B}_{\alpha+1}(\bar a,\bar b) > t$.
  Without loss of generality, there exists $c$ such that $r^{A,B}_\alpha(\bar a c,\bar b d) > t$ for all $d$.
  By the induction hypothesis, for each $d$, there exists an $(n+1)$-ary $(\Omega, I)$-formula $\phi_d$ of quantifier rank $\leq \alpha$ such that $\bigl| \phi_d^A(\bar a,c) - \phi_d^B(\bar b,d) \bigr| > t$, and possibly replacing $\phi_d$ with another formula of the same kind, we may assume that $\phi_d^A(\bar a,c) > t > 0 = \phi_d^B(\bar b,d)$.
  Now, $\psi = \sup_{x_n} \, \bigwedge_{d \in \bN} \, \phi_d$ is an $n$-ary $(\Omega, I)$-formula of quantifier rank $\leq \alpha+1$ and
  \begin{equation*}
    \psi^A(\bar a) \geq t > 0 \geq \psi^B(\bar b),  
  \end{equation*}
  which is enough.

  Conversely, assume that $\sup_\phi \, \bigl| \phi^A(\bar a) - \phi^B(\bar b) \bigr| > t$.
  Then $\bigl| \phi^A(\bar a) - \phi^B(\bar b) \bigr| > t$ for some $n$-ary $\Omega$-formula $\phi $ of quantifier rank $\leq \alpha+1$.
  If $\phi$ is of the form $\bigvee_i \, \phi_i$, $\bigwedge_i \, \phi_i$, or $f(\phi_0, \ldots, \phi_{k-1})$, where $f$ is a $1$-Lipschitz connective, then we can replace $\phi$ with one of the $\phi_i$.
  If $\phi$ is basic, use the fact that $r_0 \leq r_{\alpha+1}$ (Lemma~\ref{lem:IncreasingScottDistance}).
  We are left with the case where $\phi = \sup_{x_n} \psi$ (or $\phi = \inf_{x_n} \psi$ but it is similar), where $\qr \psi = \alpha$. We may assume that $\phi^A(\bar a) > t > 0 = \phi^B(\bar b)$.
  In other words, there exists $c$ such that $\psi^A(\bar a,c) > t$ and yet $\psi^B(\bar b,d) \leq 0$ for all $d$.
  By the induction hypothesis, $r^{A,B}_\alpha(\bar a c,\bar b d) > t$ for this one $c$ and all $d$, so $r^{A,B}_{\alpha+1}(\bar a,\bar b) \geq t$ and we are done.
\end{proof}

%%%%%%%%%%%%%%%%%%%%%%%%%%%%%%%%%%%%%%%%%%%%%%%%%%

\subsection{Scott rank and Scott sentence}

In this subsection, given a separable structure $A$, we describe how to associate to it a countable ordinal, its \df{Scott rank}, and construct a sentence that describes it up to $E_\infty$-equivalence.

\begin{defn}
  \label{df:Scott-rank}
  We call the least ordinal $\alpha$ for which $r^{A, B}_\alpha = r^{A, B}_{\alpha+1}$ the \emph{($\Omega$-)Scott rank} of the pair $A,B$, denoted by $\alpha_{A,B}$ (or $\alpha_{\Omega,A,B}$).
  If $A = B$, we call it the \emph{($\Omega$-)Scott rank} of $A$ and denote it by $\alpha_A$.
\end{defn}
Note that by Lemma~\ref{lem:IncreasingScottDistance}, if $A$ is infinite, then $\alpha_A < |A|^+$.

\begin{lemma}
  \label{lem:ScottEquivalence}
  If the structures $A$ and $B$ are $E_\infty$-equivalent, then $\alpha_{A,C} = \alpha_{B,C}$ for any structure $C$ and, in particular, $\alpha_A = \alpha_{A,B} = \alpha_B$.
\end{lemma}
\begin{proof}
  Let $\alpha = \alpha_{A,C}$. By symmetry, it will suffice to prove that $\alpha_{B,C} \leq \alpha$.
  
  Let $\bar b \in B^n$ and $\bar c \in C^n$.
  As $r_\infty(A, B) = 0$, for any $\eps > 0$, there exists a tuple $\bar a \in A^n$ such that
  \begin{equation*}
    r_\alpha(A\bar a,B\bar b) \leq r_{\alpha+1}(A\bar a,B\bar b) \leq r_\infty(A\bar a,B\bar b) < \eps.    
  \end{equation*}
  Since $r_\alpha(A\bar a,C\bar c) = r_{\alpha+1}(A\bar a,C\bar c)$, we have
  \begin{equation*}
    \bigl| r_\alpha(B\bar b, C\bar c) - r_{\alpha+1}(B\bar b, C\bar c) \bigr| <
      \bigl| r_\alpha(A\bar a, C\bar c) - r_{\alpha+1}(A\bar a, C\bar c) \bigr| + 2\eps = 2\eps.
  \end{equation*}
  As $\eps$ is arbitrary, $r_\alpha(B\bar b,C\bar c) = r_{\alpha+1}(B\bar b,C\bar c)$, as desired.
\end{proof}

Next we observe that, analogously to the classical case, for every separable structure $A$, each $\bar a \in A^n$, and each ordinal $\alpha$, there exists a formula $\phi_{\alpha, A\bar a}(\bar x)$ such that for all structures $B$,
\begin{equation} \label{eq:phi-alpha}
  \phi^B_{\alpha,n,A\bar a}(\bar b) = r^{A,B}_{\alpha,n}(\bar a,\bar b) \wedge 1.
\end{equation}
As formulas are always uniformly bounded, taking the minimum with $1$ (or some other constant) above is necessary.

First, we fix a countable, dense subset $D \sub A$. Note that the formulas that we define do depend on this choice of $D$; however for different choices of $D$, one obtains equivalent formulas. For a countable ordinal $\alpha$, $n \in \bN$ and $\bar a \in A^n$, we define inductively the $n$-ary $\Omega$-formula $\phi_{\alpha,n,A\bar a}$ as follows.

For $\alpha = 0$,
\begin{equation*} 
  \phi_{0,n,A\bar a}(x_0, \ldots, x_{n-1}) = \bigvee_\phi \, \bigl| \phi^A(\bar a) - \phi(x_0,\ldots,x_{n-1}) \bigr|,
\end{equation*}
as $\phi$ varies over a countable family of basic $n$-ary $(\Omega, [0, 1])$-formulas, dense in the norm given by \eqref{eq:norm-formulas} (see Lemma~\ref{l:formulas-dens-char}).
For $\alpha$ limit,
\begin{equation*}
  \phi_{\alpha,n,A\bar a} = \bigvee_{\beta<\alpha} \, \phi_{\beta,n,A\bar a}.
\end{equation*}
For a successor,
\begin{equation}
  \label{eq:Scott-formula-a+1}
  \phi_{\alpha+1,n,A\bar a}(x_0, \ldots, x_{n-1}) = \Big( \bigvee_{c \in D} \, \inf_{x_n} \, \phi_{\alpha,n+1,A\bar a c} \Big) \vee \Big( \sup_{x_n} \, \bigwedge_{c \in D} \, \phi_{\alpha,n+1,A\bar a c} \Big).
\end{equation}

An easy induction shows that $\phi_{\alpha,n,A\bar b}$ is an $n$-ary $(\Omega, [0, 1])$-formula of quantifier rank $\alpha$, and that \eqref{eq:phi-alpha} holds.

Now let $\alpha_A$ be the Scott rank of $A$ and note that, as $A$ is separable, by Lemma~\ref{lem:IncreasingScottDistance}, $\alpha_A < \omega_1$. We define $\sigma_A$, the \emph{Scott sentence} of $A$, as
\begin{equation}
  \label{eq:Scott-sentence}
  \sigma_A = \phi_{\alpha_A,0,A} \vee \bigvee_{n, \, \bar a \in D^n} \, \sup_{x_0,\ldots,x_{n-1}} \, \half \bigl| \phi_{\alpha_A,n,A\bar a} - \phi_{\alpha_A+1,n,A\bar a} \bigr|.
\end{equation}
  This is an $(\Omega, [0, 1])$-sentence; the coefficient $\half$ is needed because the function $(x_1, x_2) \mapsto |x_1 - x_2|$ is $2$-Lipschitz and in $\Omega$-formulas, we only allow $1$-Lipschitz connectives.

  The main property of the Scott sentence is the following.
\begin{theorem}
  \label{th:Scott-sentence}
  Let $B$ be a separable structure. Then $B \models (\sigma_A = 0)$ iff $r_\infty(A,B) = 0$.
\end{theorem}
\begin{proof}
  Assume first that $B \models (\sigma_A = 0)$.
  Then the second part of $\sigma_A$ ensures that $r^{A,B}_{\alpha_A} = r^{A,B}_\infty$, and then the first part ensures that $r_{\alpha_A}(A,B) = 0$.
  Together, $r_\infty(A,B) = 0$.

  Conversely, assume that $r_\infty(A,B) = 0$.
  Then $r_{\alpha_A}(A,B) = 0$ so the first part of $\sigma_A$ vanishes on $B$.
  By Lemma~\ref{lem:ScottEquivalence}, we have $\alpha_A = \alpha_{A,B}$, so the second part of $\sigma_A$ also vanishes on $B$.
\end{proof}
% \begin{remark}
%   We would like to point out an important difference between classical and continuous Scott sentences. In classical logic, the Scott sentence associated to (the isomorphism class of) a countable structure is canonical \emph{up to equality} (if one regards the countable conjunctions and disjunctions as unordered), while in our case, the sentence depends on the dense set $D$ chosen in \eqref{eq:Scott-formula-a+1} and is only canonical up to logical equivalence. This is unavoidable: from Hjorth's results on turbulence \cite{Hjorth2000}, it follows that isomorphism of metric models is not classifiable by countable structures and Scott sentences are, of course, countable structures.
% \end{remark}

%%%%%%%%%%%%%%%%%%%%%%%%%%%%%%%%%%%%%%%%%%%%%%%%%%

\section{The space of Polish structures}
\label{sec:space-polish-struct}

From now on  we will assume that the language $L$ is countable,  and we will only consider separable structures. Then it is possible to parametrize all $L$-structures by elements of a Polish space $\cM$, in such a way  that the pseudo-distances $r_\alpha$ become Borel functions on $\cM$.

We will code function symbols by predicates in the following way. If $F$ is an $n$-ary function symbol with modulus $\Delta_F$, we replace it by the $(n+1)$-ary predicate $D_F$ defined by
\begin{equation}
  \label{eq:func-symbol}
  D_F(\bar x, y) = d(F(\bar x), y)
\end{equation}
with modulus of continuity $\Delta_{D_F}$ given by
\begin{equation}
  \label{eq:func-symbol-modulus}
    \Delta_{D_F}(\delta, \delta') = \Delta_F(\delta) + \delta'
\end{equation}
and bound equal to the bound of the metric $d$. Call $L'$ the resulting language.
\begin{lemma}
  \label{l:eliminate-func}
  Every $\Lfin(L)$-formula is equivalent to a $\Lfin(L')$-formula. A similar fact holds  for $\Lomo$.
\end{lemma}
\begin{proof}
  We show by induction that for every term $\tau(\bar x)$, there exists an $L'$-formula $D_\tau(\bar x, y)$ such that $D_\tau(\bar x, y) = d(\tau(\bar x), y)$. Suppose that $\tau = F(\tau_0, \ldots, \tau_{n-1})$, where $F$ is a function symbol and the $\tau_i$ are terms. Define $D_\tau$ by
  \begin{equation*}
    D_\tau(\bar x, y) = \qinf_{\bar z} \set{ D_F(\bar z, y) : \bigvee_i D_{\tau_i}(\bar x, z_i) = 0}.
  \end{equation*}
  This can be written as a legitimate $\Lfin$-formula by \cite{BenYaacov2008}*{Theorem~9.17}.

  Similarly, if $P(\bar \tau_0, \ldots, \tau_{n-1})$ is an atomic formula, the following $L'$-formula is equivalent to it:
  \begin{equation*}
    P'(\bar x) = \qinf_{\bar y} \set{P(\bar y) : \bigvee_i D_{\tau_i}(\bar x, y_i) = 0}.
  \end{equation*}
  Now the lemma follows by induction on formulas.
\end{proof}

Enumerate all predicates in $L'$ as $d = P_0, P_1, P_2, \ldots$ and let $n_0, n_1, \ldots$ be their respective arities. Let $\cM(L) = \cM(L')$ be the set of all $p \in \prod_i \R^{\N^{n_i}}$ such that there exists an $L$-structure $A$ and a tail-dense sequence $(a_i)_{i \in \N}$ of elements of $A$ such that
\begin{equation*}
  p(i)(j_0, \ldots, j_{n_i-1}) = P_i^A(a_{j_0}, \ldots, a_{j_{n_i-1}})
\end{equation*}
for all $i \in \N$, $(j_0, \ldots, j_{n_i-1}) \in \N^{n_i}$. We will also often write just $\cM$ when the language $L$ is clear from the context.
\begin{prop}
  \label{p:MPolish}
  $\cM$ is a $G_\delta$ subset of $\prod_i \R^{\N^{n_i}}$, and therefore a Polish space.
\end{prop}
\begin{proof}
  It is easy to check that $p \in \cM$ iff the following hold: $p(0)$ defines a pseudo-distance on $\N$; the set $\N$ is tail-dense in the metric space $(\N, p(0))$;   for every $i > 0$, $p(i) \colon \N^{n_i} \to \R$ respects the modulus $\Delta_{P_i}$ and the bound $I_{P_i}$ on $\N$;   finally, for every predicate of the form $D_F$, there exists a function $F$ that satisfies \eqref{eq:func-symbol}. Indeed, if $p$ satisfies these conditions, one can just take $A$ to be the completion of $\N$ with respect to the distance $p(0)$, extend all predicates by uniform continuity, and define the functions via \eqref{eq:func-symbol}. The first three of these conditions are clearly $G_\delta$; we check the fourth.

  Suppose that $F$ is a function symbol in $L$ with modulus $\Delta_F$. We claim that a predicate $D_F$ satisfying the modulus \eqref{eq:func-symbol-modulus} comes from a function iff it satisfies the conditions
  \begin{equation}
    \label{eq:MPolish1}
    d(y_1, y_2) \leq D_F(\bar x, y_1) + D_F(\bar x, y_2)
  \end{equation}
  and
  \begin{equation}
    \label{eq:MPolish2}
    \forall \bar x \ \forall \eps > 0 \ \exists y \quad D_F(\bar x, y) < \eps.
  \end{equation}
  The first one is a closed condition and says that $F$ is a function;  the second ensures that $F$ is total. As $D_F$ is uniformly continuous, the quantifiers $\forall \bar x$ and $\exists y$ in \eqref{eq:MPolish2} can be taken to range over the distinguished dense subset, so the condition \eqref{eq:MPolish2} is $G_\delta$. Then any predicate that respects $\Delta_{D_F}$ and satisfies \eqref{eq:MPolish1} and \eqref{eq:MPolish2} is of the form $D_F$ for some function $F$ that respects $\Delta_F$. To see this, fix $\bar x$ and take a sequence $(y_n)$ as given by \eqref{eq:MPolish2} for $\eps = 2^{-n}$; then by \eqref{eq:MPolish1}, the sequence $(y_n)$ is Cauchy and its limit $y$ satisfies $D_F(\bar x, y) = 0$. Define $F(\bar x)$ to be the (unique by \eqref{eq:MPolish1}) $y$ such that $D_F(\bar x, y) = 0$. One then easily checks that $F$ respects $\Delta_F$ and that $D_F(\bar x, y) = d(F(\bar x), y)$.
\end{proof}

We will consider an element $A \in \cM$ as a complete structure with a distinguished tail-dense set $\N \sub A$; thus we will write $P_i^A(a_0, \ldots, a_{n_i-1})$ instead of $A(i)(a_0, \ldots, a_{n_i-1})$ for $a_0, \ldots, a_{n_i-1} \in \N$. In this way, we also interpret  $r_\alpha(A \bar a, B \bar b)$, where $A, B \in \cM$ and $\bar a, \bar b \in \N^n$. Thus $r_{\alpha,n}$ is a pseudo-distance on $\cM \times \bN^n$. Since $\bN$ is dense and $r^{A,B}_\alpha$ is continuous in each variable, we also have
\begin{equation}
  \label{eq:ralpha+1}
  r^{A,B}_{\alpha+1}(\bar a,\bar b) = \sup_{c,d \in \bN} \, \inf_{c',d' \in \bN} \, r^{A,B}_\alpha(\bar ac,\bar bd') \vee r^{A,B}_\alpha(\bar ac',\bar bd),
\end{equation}
that is, it is enough to take suprema and infima over the distinguished dense sets.

\begin{prop} 
  \label{p:Borel}
  The following statements hold:
  \begin{enumerate}
  \item \label{i:p:Borel1} For every formula $\phi(\bar x)$, the function $\cM \times \N^n \to \R$, $A \bar a \mapsto \phi^A(\bar a)$ is Borel.
    
  \item \label{i:p:Borel2} For every $n \in \N$ and $\alpha < \omega_1$, the function $r_{\alpha, n} \colon (\cM \times \N^n)^2 \to \R$ is Borel. 
    
  \item \label{i:p:Borel3}
    If $\Omega$ is shift-increasing, then for every $s \in \R$, the set
  \begin{equation*}
    \set{(A \bar a, B \bar b) \in (\cM \times \N^n)^2 : r_\infty(A \bar a, B \bar b) < s}  
  \end{equation*}
  is analytic.
\end{enumerate}
\end{prop}
\begin{proof}
  \ref{i:p:Borel1} By Lemma~\ref{l:eliminate-func}, it is enough to prove the claim for $L'$-formulas, which is done by induction. Evaluation of $L'$-atomic formulas is a continuous function. For the quantifier step, note that it suffices to quantify over the distinguished dense set.
  
  \ref{i:p:Borel2} For $\alpha = 0$, recall that $r_0(A\bar a, B \bar b) \leq s$ iff for all basic $\phi$ that obey $\Omega$, $|\phi^A(\bar a) - \phi^B(\bar b)| \leq s$. By \ref{i:p:Borel1}, all of those are Borel conditions and by Lemma~\ref{l:formulas-dens-char}, it suffices to consider only countably many of them. The rest of the proof is a straightforward induction on $\alpha$ using \eqref{eq:ralpha+1}.

  \ref{i:p:Borel3} This follows from Proposition~\ref{p:BackAndForth}, which gives an analytic description of the condition $r_\infty(A \bar a, B \bar b) < s$.
\end{proof}

We obtain the following corollary from Theorem~\ref{th:Scott-sentence}.
\begin{cor}
  \label{c:classes-Borel}
  Let $L$ be a countable signature and $\Omega$ be a weak modulus. Then for every separable $L$-structure $A$, the set
  \begin{equation*}
    \set{B \in \cM(L) : r_\infty^\Omega(A, B) = 0}
  \end{equation*}
  is Borel.
\end{cor}

%%%%%%%%%%%%%%%%%%%%%%%%%%%%%%%%%%%%%%%%%%%%%%%%%%

\section{The universal weak modulus}
\label{sec:universal-case}

In this section, we show that for every countable language $L$, there exists a \df{universal weak modulus} $\Omega$ such that every $\Lomo(L)$-sentence is equivalent to an $\Omega$-sentence (Corollary~\ref{c:universal-true}) and for which the equivalence relation $E_\infty$ is the finest one possible, that of isomorphism (Theorem~\ref{th:UniversalRInfty}).

What we will need from the universal modulus is that $\Omega$-formulas be sufficiently expressive; see Proposition~\ref{p:universal-mod} below. The definition is chosen in such a way to make this work.
\begin{defn}
  \label{df:universal-mod}
  Let $L$ be a countable signature. We   say that a weak modulus $\Omega$ is \df{universal for $L$} if it satisfies the following conditions:
  \begin{enumerate}  
  \item \label{i:um:1} For every atomic formula $\phi(x_0, \ldots, x_{k-1})$, there exists $n$ such that
    \begin{equation*}
      \Delta_\phi(\delta_0, \ldots, \delta_{k-1}) \leq \Omega|_n(0, \ldots, 0, \delta_0, \ldots, \delta_{k-1});
    \end{equation*}
    
  \item \label{i:um:2} For every $k \in \N$ and every $M > 0$, there exists $n$ such that
    \begin{equation*}
      M \cdot \Omega|_k(\delta_0, \ldots, \delta_{k-1}) \leq \Omega|_n(0, \ldots, 0, \delta_0, \ldots, \delta_{k-1});
    \end{equation*}
    
  \item \label{i:um:3} For every $k, n \in \N$,
    \begin{equation*}
      \Omega|_k(\delta_0, \ldots, \delta_{k-1}) + \Omega|_n(\gamma_0, \ldots, \gamma_{n-1}) \leq \Omega|_{k+n}(\delta_0, \ldots, \delta_{k-1}, \gamma_0, \ldots, \gamma_{n-1});
    \end{equation*}
    
  \item \label{i:um:4} $\Omega$ is shift-increasing.
  \end{enumerate}
\end{defn}

\begin{prop}
  \label{p:universal-mod}
  Let $\Omega$ be a universal weak modulus for $L$. Then the following hold:
  \begin{enumerate}
  \item \label{i:pum:1} For every $k$-ary atomic formula $\phi(\bar x)$, there exists $n$ such that $\phi(x_n, \ldots, x_{n+k-1})$ is an $\Omega$-formula;

  \item \label{i:pum:2} For every $k$-ary $\Omega$-formula $\phi(\bar x)$ and every $M > 0$, there exists $n$ such that $M \cdot \phi(x_n, \ldots, x_{n+k-1})$ is an $\Omega$-formula;

  \item \label{i:pum:3} For all tuples
    \begin{equation*}
      i_0 < \cdots < i_{n-1} < j_0 < \cdots < j_{n-1},  
    \end{equation*}
    $d^\Omega\big((x_{i_0}, \ldots, x_{i_{n-1}}), (x_{j_0}, \ldots, x_{j_{n-1}})\big)$ is an $\Omega$-formula.
  \end{enumerate}
\end{prop}
\begin{proof}
  The items \ref{i:pum:1} and \ref{i:pum:2} follow from the corresponding ones in Definition~\ref{df:universal-mod}. We check \ref{i:pum:3}. Let $A$ be an $L$-structure and $\bar a, \bar b, \bar c, \bar d \in A^n$. Let $\delta_i = d(a_i, c_i)$ and $\gamma_i = d(b_i, d_i)$. We have:
  \begin{equation*}
    \begin{split}
      |d^\Omega(\bar a, \bar b) - d^\Omega(\bar c, \bar d)| &\leq d^\Omega(\bar a, \bar c) + d^\Omega(\bar b, \bar d) \\
      &= \Omega(\delta_0, \ldots, \delta_{n-1}) + \Omega(\gamma_0, \ldots, \gamma_{n-1}) \\
      &\leq \Omega(\delta_0, \ldots, \delta_{n-1}, \gamma_0, \ldots, \gamma_{n-1}) \\
      &\leq \Omega(\ldots, \delta_0, \ldots, \delta_{n-1}, \ldots, \gamma_0, \ldots, \gamma_{n-1}, \ldots),
    \end{split}
  \end{equation*}
  where in the last line, $\delta_0, \ldots, \delta_{n-1}, \gamma_0, \ldots, \gamma_{n-1}$ are in positions $i_0, \ldots, i_{n-1}, j_0, \ldots, j_{n-1}$, respectively, and the other positions are filled with zeros. The second inequality is condition \ref{i:um:3} in Definition~\ref{df:universal-mod} and the last one follows from the fact that $\Omega$ is shift-increasing. This completes the proof of \ref{i:pum:3}.
\end{proof}

\begin{prop}
  \label{p:univ-mod-exists}
  For every signature $L$, a universal modulus $\OmegaU(L)$ for $L$ exists. If $L$ is a \df{Lipschitz language} (that is, all moduli of continuity for symbols in $L$ are linear functions), then we can take $\Omega = \OmegaU(\Lip)$ as defined by \eqref{eq:OmegaULip}.
\end{prop}
\begin{proof}
  Let $\set{\phi_i}_{i \in \N}$ be an enumeration of all atomic formulas in $L$. Let
  \begin{equation*}
    \OmegaU(L)(\delta_0, \delta_1, \ldots) = \sum_{i = 0}^\infty i \cdot \sup_{k \leq i} \Delta_{\phi_k}(\delta_i, \ldots, \delta_i).
  \end{equation*}
  One easily checks that all conditions in Definition~\ref{df:universal-mod} are satisfied. Similarly for $\OmegaU(\Lip)$ and Lipschitz languages.
\end{proof}

\begin{remark}
  Our definition of a universal weak modulus is somewhat arbitrary: we have put together all conditions that we need (here, as well as in Sections \ref{sec:lopez-escob-theor} and \ref{sec:bounded-rank}) and what is important for us is the fact that such a universal modulus exists. For other purposes, one might need additional properties.
\end{remark}

\begin{theorem}
  \label{th:UniversalRInfty}
  Let $L$ be a countable signature and let $\Omega$ be a universal weak modulus for $L$.
  Let $A$ and $B$ be separable $L$-structures and $\bar v \in A^k$, $\bar w \in B^k$. Then
  \begin{equation*}
    r_\infty^\Omega(A \bar v, B \bar w) = \inf \set{d^\Omega(f(\bar v), \bar w) : f \text{ is an isomorphism } A \to B}.
  \end{equation*}
  In particular,
  \begin{equation*}
    r_\infty^\Omega(A, B) = \begin{cases}
      0 & \text{if } A \cong B, \\
      \infty & \text{otherwise.}
    \end{cases}
  \end{equation*}
\end{theorem}
\begin{proof}
  The inequality $\leq$ is clear: if $f \colon A \to B$ is an isomorphism, then it follows from Lemma~\ref{lem:ScottDistance} that for all $\alpha$,
  \begin{equation*}
    r_\alpha(A \bar v, B \bar w) \leq d^\Omega(f(\bar v), \bar w).
  \end{equation*}

  For the   inequality $\ge$, suppose that $r_\infty(A \bar v, B \bar w) < t$.
  By Proposition~\ref{p:BackAndForth}, there exist tail-dense sequences $a \in A^\bN$ and $b \in B^\bN$ such that $a|_k = \bar v$, $b|_k = \bar w$ and for all $n$, $r^{A,B}_0(a|_n, b|_n) \leq t$.

  Consider an $n$-ary atomic formula $\phi(\bar y)$. By Proposition~\ref{p:universal-mod}  \ref{i:pum:1}, \ref{i:pum:2} and the fact that $\Omega$ is shift-increasing, for every $M > 0$, there exists $l(M, \phi)$ such that for $\sigma \in \bN^n$ with $\min \sigma > l$, 
\begin{equation*}
  \phi_\sigma =  M \cdot \phi(x_{\sigma_0},\ldots,x_{\sigma_{n-1}})
\end{equation*}
is a basic $\Omega$-formula. It follows that
  \begin{equation}
    \label{eq:phi-sigma-small}
  \bigl| \phi_\sigma(a_{\sigma_0},\ldots,a_{\sigma_{n-1}}) - \phi_\sigma(b_{\sigma_0},\ldots,b_{\sigma_{n-1}}) \bigr|
  \leq r_0(a|_{\max \sigma}, b|_{\max \sigma}) < t,
\end{equation}
i.e., for all $\sigma$ with $\min \sigma > l(M, \phi)$,
\begin{equation*}
\bigl| \phi(a_{\sigma_0},\ldots,a_{\sigma_{n-1}}) - \phi(b_{\sigma_0},\ldots,b_{\sigma_{n-1}}) \bigr| < t/M,
\end{equation*}
which implies that
\begin{equation}
  \label{eq:UniversalDeltaLipschitzFormula}
  \bigl| \phi(a_{\sigma_0},\ldots,a_{\sigma_{n-1}}) - \phi(b_{\sigma_0},\ldots,b_{\sigma_{n-1}}) \bigr| \rightarrow 0 \text{ as } \min \sigma \rightarrow \infty.
\end{equation}

Applied to the formula $d(y_0,y_1)$, this means that for a strictly increasing sequence $(m_n) \subseteq \bN$, if one of the subsequences $(a_{m_n})$ and $(b_{m_n})$ is Cauchy, then so is the other. If $\lim a_{m_n} = \lim a_{m'_n}$, then $(a_{m_n}, a_{m'_n})$ is Cauchy, therefore so is $(b_{m_n}, b_{m'_n})$, and thus $\lim b_{m_n} = \lim b_{m'_n}$ (and vice versa). We therefore obtain a bijection $\theta\colon A \to B$ defined by $\theta\bigl( \lim a_{m_n} \bigr) = \lim b_{m_n}$.
  By \eqref{eq:UniversalDeltaLipschitzFormula} again, $\theta$ is an isomorphism. Next we check that $d^\Omega(\theta(a|_k), b|_k) \leq t$. For $i < k$, let $(m_{i, n})_n$ be strictly increasing sequences such that $a_{m_{i, n}} \to a_i$ as $n \to \infty$ and
  \begin{equation*}
    k < m_{0, n} < \cdots < m_{k-1, n} \quad \text{ for each } n.
  \end{equation*}
  Then Proposition~\ref{p:universal-mod} \ref{i:pum:3} and \eqref{eq:phi-sigma-small} give us that
  \begin{equation*}
    \big|d^\Omega\big(a|_k, (a_{m_{0, n}}, \ldots, a_{m_{k-1, n}}) \big)
    - d^\Omega\big(b|_k, (b_{m_{0, n}}, \ldots, b_{m_{k-1, n}}) \big) \big| < t
  \end{equation*}
  for all $n$ and taking limits yields $d^\Omega(b|_k, \theta(a|_k)) \leq t$ as desired.
\end{proof}

Applying Corollary~\ref{c:classes-Borel} allows us to recover one of the original applications of Scott sentences.
\begin{cor}
  \label{c:iso-classes-Borel}
  Let $L$ be a countable signature and $A$ be a separable $L$-structure. Then the set $\set{B \in \cM(L) : B \cong A}$ is Borel.
\end{cor}
\begin{remark}
We should note that Corollary~\ref{c:iso-classes-Borel} also follows from the main result of \cite{Elliott2013}, where the authors prove that isomorphism of Polish metric structures is Borel reducible to a Polish group action (which implies that classes are Borel). However, our result is more precise as it gives a bound of the Borel complexity of the isomorphism class in terms of the Scott rank.
\end{remark}

\begin{remark}
  \label{rem:non-closed-orbits}
  Note that in Theorem~\ref{th:UniversalRInfty}, if $r_\infty(A\bar a, B \bar b) = 0$, this does \emph{not} imply that there is an isomorphism between $A$ and $B$ that sends $\bar a$ to $\bar b$. An easy example of this is when $A = B$ and the orbits of the action of $\Aut(A)$ on $A$ are not closed. Then, for any $a$ and $b$ in the same orbit closure of $\Aut(A)$, $r_\infty(Aa, Ab) = 0$. However, if $a$ and $b$ are not in the same orbit, there is no automorphism that maps $a$ to $b$. Such a structure can be obtained as follows: if $G$ is a Polish group, its   completion under a left-invariant metric $G_L$ can be made into a metric structure in such a way that $\Aut(G_L) = G$ with the action of $G$ on $G_L$ by left translation. If $G \neq G_L$ (this holds, e.g., for $G= S_\infty$), this provides an example of an automorphism group with  orbits that are not closed. See Melleray~\cite{Melleray2010} for more details.
\end{remark}

\begin{remark}
  It is possible to take a slightly different approach  that may be more suitable for some purposes when defining the back-and-forth pseudo-distances. In the definition of $r_0$, one could replace basic formulas by full $\Lfin$-formulas that obey $\Omega$. This has the advantage of being much more robust with respect to syntactical considerations: one can assume from the start, without loss of generality, that the signature contains no function symbols and that all predicates are Lipschitz (see \cite{BenYaacov2013b}*{Corollary~1.7}), thus allowing a unique universal weak modulus that works for all languages (namely, $\OmegaU(\Lip)$). The main disadvantage of this approach is that $r_0$ becomes very hard to compute, while with our definition, computations are sometimes feasible (see Section~\ref{sec:some-examples}).
\end{remark}

%%%%%%%%%%%%%%%%%%%%%%%%%%%%%%%%%%%%%%%%%%%%%%%%%%

\section{A López-Escobar theorem}
\label{sec:lopez-escob-theor}

Next we prove a continuous analogue of the classical López-Escobar theorem: that every Borel set of structures invariant under isomorphism is the set of models of some $\Lomo$-sentence. This is a converse to Proposition~\ref{p:Borel} \ref{i:p:Borel1}.

Let  $L$ be a fixed countable language, and, as before, denote by $\cM$ the space of Polish metric $L$-structures. Let $\Omega$ be a universal weak modulus for $L$. Let $\cong$ be the (analytic) equivalence relation of isomorphism on $\cM$ (by Theorem~\ref{th:UniversalRInfty}, this is the same as $E_\infty$).

We proceed to the main theorem of this section. We use the definition of Baire class for real valued, Borel functions given in \cite{Kechris1995}*{24.1}. If $\theta$ is the least ordinal such that the function $U$ is of Baire class $\theta$, we write $\BC(U) =\theta$.
\begin{theorem}
  \label{th:Lopez-Escobar}
  Let $I \sub \R$ be a compact interval and $U \colon \cM \to I$ be a Borel function that is $\cong$-invariant. Then there exists an $(\Omega, I)$-sentence $\phi$ such that
  \begin{equation*}
    U(A) = \phi^A \quad \text{for all } A \in \cM.
  \end{equation*}
  Moreover, we have that $\qr(\phi) \le \omega \cdot 2 \cdot (2 + \BC(U) +1)$. 
\end{theorem}
There are two known proofs of the López-Escobar theorem in the classical case: the original one in \cite{Lopez-Escobar1965}, based on proof theory, and another one by Vaught~\cite{Vaught1974}, based on Baire category and the fact that isomorphism is given by an action of $S_\infty$. It is the latter that we adapt to our situation. In our setting, we do not have a group action around but we do have Baire category and it will turn out that this is sufficient.

If $A \in \cM$ is a model, we denote by $[A]$ the $\cong$-equivalence class of $A$ in $\cM$ (by Corollary~\ref{c:iso-classes-Borel}, this is a Borel set). We also define
\[
D(A) = \set[\big]{y \in A^\N : \set{y(n) : n \in \N} \text{ is tail-dense in } A}.
\]
Here and later, by a slight abuse of notation, we denote by $A$ both the element of $\cM$ (which prescribes the values of all predicates on a dense set) and the actual model (the completion of $\N$ with respect to the metric). Note that $D(A)$ is a $G_\delta$ set in $A^\N$, and therefore a Polish space. %(see, e.g, \cite{Kechris1995}*{Theorem~3.11}).

For each $A \in \cM$, there is a natural continuous surjection $\pi_A \colon D(A) \to [A]$ defined by
\[
P^{\pi_A(y)}(i_0, \ldots, i_{n-1}) = P^A(y(i_0), \ldots, y(i_{n-1}))
\]
for all predicates $P \in L$ of arity $n$ and all $i_0, \ldots, i_{n-1} \in \N$. Because of the way we code models, we may assume, without loss of generality, that the language $L$ does not contain function symbols. The map $\pi_A$ will allow us to push forward the ideal of meager sets on $D(A)$ to $[A]$, which is an essential element of the proof.

If $X$ is a Baire topological space, we will use the category quantifiers ``$\exists^*$'' to mean ``for a non-meagre set of'' and ``$\forall^*$'' to mean ``for a comeagre set of''. If $f \colon X \to \R$ is a Baire measurable function, define the operators $\supstar$ (essential supremum) and $\infstar$ (essential infimum) as follows:
\[ \begin{split}
  \supstar_{x \in X} f(x) > t &\iff \exists^* x \in X \ f(x) > t \\
  \infstar_{x \in X} f(x) < t &\iff \exists^* x \in X \ f(x) < t.
\end{split} \]
Note that if $f$ is continuous, then
\begin{equation*}
\supstar_{x \in X} f(x) = \sup_{x \in X} f(x) \quad \And \quad \infstar_{x \in X} f(x) = \inf_{x \in X} f(x).
\end{equation*}

If $y$ and $z$ are finite or infinite sequences of elements of a metric space $(Y, d)$  at least one of which is finite, we will abuse notation and write $d^\Omega(y, z)$ instead of $d^\Omega(y|_m, z|_m)$, where $m = \min(|y|, |z|)$. If $y$ or $z$ is the empty sequence, we set $d^\Omega(y, z) = 0$.

We will call a function $F \colon \cM \to \R$ a \df{basic continuous function} if there exists $k \in \N$, predicates $P_i$ of arity $n_i$ for $i < k$, elements $a_{i, j} \in \N$ ($i < k, j < n_i$), and a Lipschitz (for the $\max$ distance on $\R^k$) function $f \colon \R^k \to \R$ such that
\begin{equation*}
  F(A) = f\big( (P_i^A(a_{i, 0}, \ldots, a_{i, n_i-1}))_{i<k} \big).
\end{equation*}
\begin{lemma}
  \label{l:Borel-functions}
  Let $I \sub \R$ be a compact interval. The class of Borel functions $\cM \to I$ is the smallest class that contains the basic continuous functions taking values in $I$ and is closed under countable suprema and infima.
\end{lemma}
\begin{proof}
  First recall that, by \cite{Kechris1995}*{11.6}, the class of Borel functions on a Polish space is the smallest class containing all continuous functions and closed under pointwise limits. If $\lim_n f_n$ exists then  $\lim_n f_n = \inf_n \sup_{m \geq n} f_m$, so it is also the smallest class containing all continuous functions and closed under countable suprema and infima.

  Thus it suffices to prove that all continuous functions can be obtained from the basic ones using pointwise limits. As $\cM \sub \prod_i I_{P_i}^{\N^{n_{P_i}}}$ is a $G_\delta$ set (Proposition~\ref{p:MPolish}), any continuous function on $\cM$ extends to a Baire class $2$ function on the compact space $Z = \prod_i I_{P_i}^{\N^{n_{P_i}}}$. Any continuous function on $Z$ is uniformly continuous and can be approximated by a function that only depends on finitely many coordinates, that is, a function of the type $A \mapsto f\big( (P_i^A(a_{i, 0}, \ldots, a_{i, n_i-1}))_{i < k} \big)$ with $f \colon K \to \R$ continuous, where $K = \prod_i I_{P_i}$. By Stone--Weierstrass, $f$ can be approximated by Lipschitz functions, and such functions are permitted in our definition of basic continuous functions. By taking limits (two times), we can obtain any Baire class $2$ function on $Z$ from continuous functions.
\end{proof}

If $M > 0$, $U \colon \cM \to [0, M]$ is a Borel function, and $k \in \N$, define the function $U^{*k} \colon \cM \times \N^k \to [0, M]$ as follows:
\[
U^{*k}(A, \bar u) = \infstar_{y \in D(A)} U(\pi_A(y)) \vee k d^\Omega(y, \bar u).
\]

The following result easily implies Theorem~\ref{th:Lopez-Escobar} and is better suited for an inductive argument. 
\begin{theorem}
  \label{th:Vaught}
  Let $M > 0$. For every Borel function $U \colon \cM \to [0, M]$ and for every $k \in \N$, there exists $l \in \N$ and an ($\Omega, [0, M])$-formula $\phi_{U,k}(x_l, \ldots, x_{l+k-1})$ such that
  \begin{equation}
    \label{eq:LE-techn}
    U^{*k}(A, \bar u) = \phi_{U,k}^A(u_0, \ldots, u_{k-1})    
  \end{equation}
for all $(A, \bar u) \in \cM  \times \N^k$.
Moreover, \begin{equation} \label{eqn:qrr}  \qr(\phi_{U,k}) \le \omega \cdot 2 \cdot (2 + \BC(U) +1). \end{equation}
\end{theorem}
\begin{proof}
  For the main statement of the theorem, by Lemma~\ref{l:Borel-functions}, it is enough to check that the class of functions $U$ that satisfy the theorem contains the basic continuous functions and is closed under countable infima and the operation $U \mapsto M - U$. (This operation exchanges infima and suprema and preserves the interval $[0, M]$.) Thereafter we will check ``moreover'' statement  by  bounding  the quantifier rank of the constructed formulas.

First suppose that $U$ is a basic continuous function. Then, by the properties of the universal modulus (Proposition~\ref{p:universal-mod} \ref{i:pum:1}, \ref{i:pum:2} and the fact that $\Omega$ is shift-increasing), there exist $n \in \N$, variables $z_0, \ldots, z_{n-1}$, and a basic $\Omega$-formula $\theta(z_0, \ldots, z_{n-1})$ such that
  $U(A) = \theta^A(0, \ldots, n-1)$. As $U$ is continuous, $\sup$ and $\supstar$ coincide, and by the properties of the universal modulus again, there exists $l \in \N$ such that 
\begin{equation} \label{eqn:basic}
  \phi_{U, k}(x_l, \ldots, x_{l+k-1}) = \qinf_{\bar z} \theta(\bar z) \vee k  d^\Omega(\bar x, \bar z)
\end{equation}
is an $\Omega$-formula and satisfies \eqref{eq:LE-techn}.  (Note that $\phi_{U, k}$ is technically not an $(\Omega, [0, M])$-formula as the subformula $k  d^\Omega(\bar x, \bar z)$ does not respect the bound $[0, M]$. This can be easily fixed by replacing it by $(k  d^\Omega(\bar x, \bar z)) \wedge M$ but we will not do this in order to avoid cluttering the exposition. This change is completely harmless as it transforms $\phi_{U, k}$ into an equivalent formula. A similar remark also applies to the constructions below.)

  Suppose now that $U = \inf_n U_n$ and that the formulas $\phi_{{U_n}, k}$ have already been constructed. We verify that
\begin{equation*}
  \phi_{U, k}(\bar x) = \bigwedge_n \phi_{U_n, k}(\bar x)  
\end{equation*}
works. Using the inductive hypothesis and the fact that $\infstar$ commutes with  taking countable infima, we obtain
\begin{equation*}
  \begin{split}
U^{*k}(A, \bar u) & = \infstar_{y \in D(A)} \inf_n \ U_n (\pi_A(y)) \vee k d^\Omega(y, \bar u) \\ 
		& = \inf_n \infstar_{y \in D(A)} \ U_n (\pi_A(y)) \vee k d^\Omega(y, \bar u) \\
		& = \inf_n \ \phi_{U_n,k}^A(\bar u).
  \end{split}
\end{equation*}

Finally, suppose that $U = M - V$, and that formulas $\phi_{V, m}$ satisfying 
\begin{equation}
  \label{eq:LE:ind-hyp}
  \phi_{V, m}^A(\bar z) = \infstar_{v \in D(A)}  V(v) \vee m d^\Omega(v, \bar z)
\end{equation}
have already been constructed for every $m$.

We show that
\begin{equation*}
  \phi_{U,k}(\bar x) = \bigwedge_m \inf_{z_0, \ldots, z_{m-1}} (M - \phi_{V, m}(\bar z)) \vee k d^\Omega(\bar x, \bar z)
\end{equation*}
satisfies \eqref{eq:LE-techn}. Note also that the variables can be chosen in a such a way that $\phi_{U, k}$ is an $\Omega$-formula.

Fix $A$, $\bar u$, and $r \in \R$  in order to show that
\begin{equation}
  \label{eq:LE:goal}
  U^{*k}(A, \bar u) < r \iff \phi_{U,k}^A(\bar u) < r.
\end{equation}

If $\bar z \in A^m$ and $r > 0$, let $\cB(\bar z, r)$ be the open set
\begin{equation*}
  \cB(\bar z, r) = \set{y \in D(A) : d^\Omega(\bar z, y) < r}.
\end{equation*}
Suppressing $\pi_A$ from the notation, we have that
\begin{multline*}
    U^{*k}(A, \bar u) < r \iff \exists^* y \in D(A) \  \Big( k d^\Omega(\bar u, y) < r \And (M - V(y)) < r \Big) \\
    \iff \exists \text{ open } W \sub D(A) \ \Big( W \sub \cB(\bar u, r/k) \And \forall^*y \in W \ V(y) > M-r \Big).
\end{multline*}
On the other hand,
\begin{align*}
  \phi_{U, k}^A(\bar u) < r
  \iff{}& \exists m, \bar z \ \Big( \big((M - \infstar_{v \in D(A)} V(v)) \vee m d^\Omega(\bar z, v) \big) < r \\
    &\qquad \And k d^\Omega(\bar u, \bar z) < r \Big) \\
  \iff{}& \exists m, \bar z\ \Big( \big( \forall^* v \in D(A) \ V(v) > M - r \Or m d^\Omega(v, \bar z) > M-r \big) \Big) \\
    &\qquad \And k d^\Omega(\bar u, \bar z) < r \Big) \\
  \iff{}& \exists m, \bar z\ \Big( \big( \forall^*v \in \cB(\bar z, (M-r)/m) \ V(v) > M - r \big) \\
  &\qquad \And k d^\Omega(\bar u, \bar z) < r \Big).
\end{align*}
For the direction $(\Leftarrow)$ of \eqref{eq:LE:goal}, suppose that $z$ and $m$ are given. By enlarging $m$ if necessary (and prolonging $\bar z$ arbitrarily), we may assume that $(M-r)/m < r/k - d^\Omega(\bar u, \bar z)$. Then it suffices to take $W = \cB(\bar z, (M-r)/m)$ to witness that $U^{*k}(A, \bar u) < r$.

For the other direction, suppose that $W$ is given. Let $y \in W$ be arbitrary and take $m$ so big that $\cB(y|_m, (M-r)/m) \sub W$. Finally, set $z = y|_m$.

This completes the induction.

Now we give bounds on the quantifier ranks.   Let $F $ be the function defined   by $F(\theta) = \omega \cdot 2 \cdot (2+\theta+1)$ for each ordinal $\theta$. 
 Note that if $U = \lim_n U_n$ for Borel functions $U_n$, then we may assume that the range of each $U_n$ is contained in $[0,M]$ without increasing the Baire class, and we have 
\begin{equation} \label{eqn:Ulimit} U = \limsup_n U_n = \inf_m (M- \inf_{n \ge m} (M- U_n)). \end{equation}
 If $U$ is basic continuous, for $\phi_{U,k}$ defined in \eqref{eqn:basic}, we have $\qr (\phi_{U, k}) < \omega$. 
 
  For the general case, we induct on $\BC(U)$. 
  First suppose that  $\BC(U)=0$, that is, $U$ is continuous. By the proof of Lemma~\ref{l:Borel-functions}, we need to take limits two times to obtain $U$ from basic continuous functions. Moreover, each basic continuous function is represented by a formula of quantifier rank $<\omega$; thus \eqref{eqn:Ulimit} shows that $\qr \phi_{U, k} \leq \omega + \omega \cdot 2 + \omega \cdot 2 = \omega \cdot 5$, whence \eqref{eqn:qrr} holds for $U$.
 
 Now suppose that $\BC(U) = \theta>0$. Then $U = \lim_n U_n$ where $\BC(U_n)< \theta$ for each $n$. By the inductive hypothesis, $\qr(\phi_{U_n, k}) \le F( \BC(U_n))$, so there is $\rho < F(\theta)$ such that for each $n, k$ we have $\qr(\phi_{M-U_n, k}) \le \rho$. Therefore $\qr  (\psi_{m,k}) \le \rho$ where $\psi_{m,k} = \phi_{\inf_{n \ge m}  M-U_n,k}$. A similar argument now shows that $\qr (\phi_{U,k}) \le \rho + \omega \le F(\theta)$. 
\end{proof} 

\begin{proof}[Proof of Theorem~\ref{th:Lopez-Escobar}]
 Suppose $U \colon \cM \to I$ is a Borel function invariant under isomorphism. Let $M = |I|$ and let $U' = U - \min I$, so that $U'$ takes values in $[0, M]$. In Theorem~\ref{th:Vaught}, take $\bar u = \emptyset$ and observe that $U'^{*0}(A, \emptyset) = U'(A)$ for every $A$. Thus,  $\phi= \phi_{U',0} + M $ is (equivalent to) an $(\Omega, I)$-sentence such that $U(A) = \phi^A$ for every $A$. Moreover, such a  sentence has quantifier rank at most $\omega \cdot 2 \cdot (2 + \BC(U)+1)$.
\end{proof}

A standard corollary of the López-Escobar theorem is the Craig interpolation theorem for $\Lomo$-logic. (In fact, López-Escobar first proved the interpolation result and then deduced his theorem from it.) Here we note the continuous version.
\begin{cor}[Interpolation]
  \label{c:interpolation}
  Suppose that  $L_1$ and $L_2$ are two countable signatures and $\phi_1$ and $\phi_2$ are $\Lomo(L_1)$ and $\Lomo(L_2)$ sentences respectively. Suppose that $\phi^A_1 \leq \phi^A_2$ for every separable model $A$ of $\Lomo(L_1 \cup L_2)$. Then there is an interpolating sentence $\theta$ in $\Lomo(L_1 \cap L_2)$ such that $\models \phi_1 \leq \theta \leq \phi_2$.
\end{cor}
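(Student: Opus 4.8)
The plan is to deduce Corollary~\ref{c:interpolation} from Theorem~\ref{th:Vaught-simple}, imitating Vaught's derivation of Craig interpolation from the Lopez--Escobar theorem but with an \emph{invariant} version of the Lusin separation theorem doing the main work. Write $\mcL_0 = \mcL_1 \cap \mcL_2$ and $\mcL = \mcL_1 \cup \mcL_2$, and let $\rho_i \colon \mcM(\mcL_i) \to \mcM(\mcL_0)$ for $i = 1,2$ be the continuous reduct maps. First I would reduce to the case that $\phi_1$ and $\phi_2$ are bounded: in general one truncates both sentences to $[-n, n]$, interpolates for each $n$, and reassembles the resulting sentences $\theta_n$ as $\bigvee_n \bigwedge_{m \geq n} \theta_m$ --- still an $L_{\omega_1\omega}$-sentence, the modulus-of-continuity condition being vacuous for sentences --- which computes $\liminf_m \theta_m$ and is readily seen to lie between $\phi_1$ and $\phi_2$. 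Fixing a bound $M$ on $|\phi_1|$ and $|\phi_2|$, I would then set
\[
A_1(q) = \max\!\bigl(-M,\ \sup\set{\phi_1^p : p \in \mcM(\mcL_1),\ \rho_1(p) = q}\bigr), \qquad A_2(q) = \min\!\bigl(M,\ \inf\set{\phi_2^p : p \in \mcM(\mcL_2),\ \rho_2(p) = q}\bigr),
\]
and try to produce an isomorphism-invariant bounded Borel function $A \colon \mcM(\mcL_0) \to \R$ with $A_1 \leq A \leq A_2$. Theorem~\ref{th:Vaught-simple} would then give $\theta \in L_{\omega_1\omega}(\mcL_0)$ with $\theta^q = A(q)$ for all $q$; as $\theta$ mentions only $\mcL_0$-symbols, every $\mcL_1$-model $p$ satisfies $\theta^p = A(\rho_1(p)) \geq A_1(\rho_1(p)) \geq \phi_1^p$, and dually $\theta^p \leq \phi_2^p$ for every $\mcL_2$-model, whence $\models \phi_1 \leq \theta \leq \phi_2$.

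The sandwiching function requires three ingredients. (i) For every $\psi \in L_{\omega_1\omega}$ the evaluation $p \mapsto \psi^p$ is Borel, by a routine induction on $\psi$ (atomic formulas are coordinate projections, connectives are continuous, and $\sup_x$, $\inf_x$, $\bigvee_n$, $\bigwedge_n$ send a countable family of Borel functions to its pointwise sup or inf). Hence $\set{q : A_1(q) > r} = \rho_1(\set{p : \phi_1^p > r})$ is analytic for every $r \in \R$, and isomorphism-invariant because $\phi_1$ and $\rho_1$ are equivariant; symmetrically, $\set{q : A_2(q) < r}$ is analytic and isomorphism-invariant. (ii) $A_1 \leq A_2$: given $q$ with an $\mcL_1$-expansion $p$ and an $\mcL_2$-expansion $p'$, the $\mcL$-structure $N$ agreeing with $p$ on $\mcL_1$ and with $p'$ on $\mcL_2$ is well defined (both reduce to $q$ over $\mcL_0$) and $\phi_1^p = \phi_1^N \leq \phi_2^N = \phi_2^{p'}$ by hypothesis; take a supremum over $p$ and an infimum over $p'$. (iii) An invariant Lusin separation theorem: two disjoint isomorphism-invariant analytic subsets of $\mcM(\mcL_0)$ can be separated by an isomorphism-invariant Borel set. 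This is the step I expect to be the main obstacle. I would get it either using that $E$ is Borel reducible to the orbit equivalence relation of a Polish group action \cite{Elliott2013} (invariant Borel separation transferring along such a reduction), or --- in the spirit of Section~\ref{sec:vaught} --- by taking a Borel separator $C$ from the ordinary Lusin theorem and replacing it by $C^{\triangle} = \set{p : \forall^* y \in D(p)\ \pi_p(y) \in C}$, which is isomorphism-invariant (the meagre ideal pushed forward to $[p]_E$ by $\pi_p$ depends only on $[p]_E$), Borel (the category quantifier preserves Borelness, \cite{Kechris1995}*{16.1}), and still a separator as long as all structures are nonempty.

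Granting these, I would build $A$ by interpolating Borel sets along the rationals. Fixing an enumeration $\mathbb{Q} = \set{r_0, r_1, \dots}$, I recursively choose isomorphism-invariant Borel sets $D_r \sub \mcM(\mcL_0)$ with $r < s \Rightarrow D_s \sub D_r$ and $\set{A_1 > r} \sub D_r \sub \set{A_2 \geq r}$: at stage $n$, with $r = r_n$, I put $r^- = \max\set{r_i : i < n,\ r_i < r}$ and $r^+ = \min\set{r_i : i < n,\ r_i > r}$ (omitting a clause when the set is empty), observe that $\set{A_1 > r} \cup D_{r^+}$ and $(\mcM(\mcL_0) \setminus D_{r^-}) \cup \set{A_2 < r}$ are isomorphism-invariant, analytic and disjoint --- by $A_1 \leq A_2$, $r^- < r < r^+$, and the constraints already imposed --- and apply (iii). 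Then $A(q) = \sup\set{r \in \mathbb{Q} : q \in D_r}$ (with $\sup \emptyset = -M$) is isomorphism-invariant, has $\set{A > t} = \bigcup_{r > t} D_r$ Borel, satisfies $A \geq A_1$ and $A \leq A_2$, and so takes values in $[-M, M]$. The equivariance verifications, the amalgamation in (ii), and the stage-$n$ bookkeeping of analyticity and disjointness I would relegate to the details; the one genuinely new input is the invariant separation theorem of (iii).
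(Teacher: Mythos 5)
Your proposal is correct and follows essentially the same route as the paper: for each rational $r$ one separates the disjoint invariant analytic sets $\{q : A_1(q) > r\}$ and $\{q : A_2(q) < r\}$ by an invariant Borel set $C_r$ (the paper simply cites \cite{Kechris1995}*{Exercise~14.14} for your step (iii)), defines $A(q) = \sup\{r : q \in C_r\}$, and applies the main theorem. The only real divergence is your recursive nesting of the separators, which is unnecessary: since the two families of analytic sets are already monotone in $r$, the inequalities $\models \phi_1 \leq \theta \leq \phi_2$ follow from an arbitrary choice of separator $C_r$ at each rational.
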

\begin{proof}
  Let $L_0 = L_1 \cap L_2$ and let $\pi_1 \colon \cM(L_1) \to \cM(L_0)$ and $\pi_2 \colon \cM(L_2) \to \cM(L_0)$ be the natural restriction maps. For every $r \in \Q$, consider the two analytic sets
  \[ \begin{split}
    \set{A \in \cM(L_0) &: \exists B \in \cM(L_1) \ \pi_1(B) = A \And \phi_1^B > r} \text{ and} \\
    \set{A \in \cM(L_0) &: \exists B \in \cM(L_2) \ \pi_2(B) = A \And \phi_2^B < r}.   
  \end{split} \]
  They are $\cong$-invariant and by hypothesis, they are disjoint. By \cite{Kechris1995}*{Exercise~14.14}, there exists an invariant Borel set $C_r$ that separates them. Define $U \colon \cM(L_0) \to \R$ by
  \[
  U(A) = \sup \set{r : A \in C_r}.
  \]
  The function $U$ is $\cong$-invariant and Borel. So by Theorem~\ref{th:Lopez-Escobar}, there exists a sentence $\theta$ such that $U(A) = \theta^A$ and then for all separable $A$, $\phi_1^A \leq \theta^A \leq \phi_2^A$. Using the downwards Löwenheim--Skolem theorem, this implies that $\phi_1 \leq \theta \leq \phi_2$ is universally valid.
\end{proof}

Another corollary is that our universal modulus $\Omega$ is indeed universal.
\begin{cor}
  \label{c:universal-true}
  Let $L$ be a countable language and $\Omega$ be a universal modulus for $L$. Then every $\Lomo(\cL)$-sentence $\phi$ is equivalent to an $\Omega$-sentence $\phi'$. 
\end{cor}
\begin{proof}
  Apply Theorem~\ref{th:Lopez-Escobar} to the Borel function $U(A) = \phi^A$.
\end{proof}

%%%%%%%%%%%%%%%%%%%%%%%%%%%%%%%%%%%%%%%%%%%%%%%%%%

\section{Bounded rank and Borelness of isomorphism}
\label{sec:bounded-rank}

In this section, we characterize when the isomorphism relation restricted to an invariant Borel subset of $\cM$ is Borel and prove Theorem~\ref{th:intro:Borel-bounded}. This is again analogous to the classical setting (see, e.g., \cite{Gao2009a}*{Theorem~12.2.4}). Our  proof follows the same general outline, but certain new features appear.

For the remainder of the section, we fix a countable language $L$ and a weak modulus $\Omega$ universal for $L$. Recall that a \df{fragment} $F$ of $\Lomo(L)$ is a \emph{separable} set of formulas containing all atomic formulas and closed under subformulas, substitution of terms for variables, quantifiers, and finitary connectives. An \df{$F$-theory} is a collection of statements of the form $\phi = 0$, where $\phi$ is a sentence in $F$. If $A$ is a structure, the \df{theory of $A$}, denoted by $\Th(A)$, is the collection of all statements $\phi = 0$ that are true in $A$.

We will use a homogeneity result for atomic models. In order to state it correctly, we will need to define a topometric structure (see \cite{BenYaacov2008a}) on the (non-compact) type spaces, which we proceed to do. Let $T$ be an $F$-theory. Define a seminorm $\nm{\cdot}_T$ on the (unital) algebra of $n$-ary $F$-formulas by:
\begin{equation*}
  \nm{\phi}_T = \sup \set{|\phi^A(\bar a)| : A \models T, \bar a \in A^n}.
\end{equation*}
Denote by $\widehat{\tS_n(T)}$ the Gelfand space of the Hausdorff completion of the algebra of all $n$-ary $F$-formulas equipped with this seminorm. (See, for example, \cite{Folland1995}*{Chapter~1} for details on Gelfand theory.) This is the compact space of \df{finitely consistent $n$-types}. In what follows, we will identify the algebra of formulas with $C(\widehat{\tS_n(T)})$ via the Gelfand transform and simply use the notation $\phi(p)$ for the value that the type $p$ gives to the formula $\phi$. Alternatively, if $\phi(p) = r$, we will also write $p(\bar x) \models \phi(\bar x) = r$.

Say that a tuple $\bar a \in A^n$ \df{realizes} a type $p$ (notation $\bar a \models p$ or $\tp(\bar a) = p$) if $\phi(p) = \phi^A(\bar a)$ for all $n$-ary formulas $\phi \in F$. If there exists $\bar a \in A^n$ such that $\bar a \models p$, we will say that \df{$A$ realizes $p$}. Otherwise, say that $A$ \df{omits} $p$. Denote by $\tS_n(T)$ the set of \df{realizable} types (or just \df{types} for short):
\begin{equation*}
  \tS_n(T) = \set{p \in \widehat{\tS_n(T)} : \exists A \models T \text{ such that $A$ realizes } p}.
\end{equation*}
The \df{logic topology} on $\tS_n(T)$ is the one inherited from $\widehat{\tS_n(T)}$. The compactness theorem tells us that if $F = \Lfin(L)$, then $\tS_n(T) = \widehat{\tS_n(T)}$. While this fails for general fragments, $\tS_n(T)$ is always a $G_\delta$ subset of $\widehat{\tS_n(T)}$ and therefore a Polish space. (We will not prove this fact as we will not use it.)

Defining the distance on types is more delicate because of the lack of compactness. The definition we give is inspired by Caicedo and Iovino~\cite{Caicedo2014}. As in finitary continuous logic, the topology defined by the distance on types is finer than the logic topology and the distance is lower semicontinuous in the logic topology.

We will need to fix distances on powers of $A$ and for most purposes, any distance will do. However, in order to obtain exact equalities in the two propositions below, it will be most convenient to take $d = d^\Omega$; in the remainder of the section, when we write $d(\bar a, \bar b)$ for $\bar a, \bar b \in A^n$, we mean $d^\Omega(\bar a, \bar b)$ (and similarly, in formulas).

Recall that the operation $\dotminus$ is defined by $x \dotminus y = 0 \vee (x-y)$.
Let the distance $\partial_F$ on $\tS_n(T)$ be given by:
\begin{multline}
  \label{eq:type-distance}
  \partial_F(p, q) \leq s \iff \forall \phi \in F\ 
    q(\bar x) \models \qinf_{\bar y} (d(\bar x, \bar y) \dotminus s) \vee |\phi(\bar y) - \phi(p)| = 0.
\end{multline}
In words, $\partial_F(p, q) \leq s$ iff for every $\phi \in F$ and every $\eps > 0$, for every realization $\bar a \in A^n$ of $q$, there exists $\bar a' \in A^n$ such that $d(\bar a, \bar a') < s + \eps$ and $|\phi(\bar a') - \phi(p)| < \eps$. For $F = \Lfin$, using compactness, this definition is equivalent to the usual one (see \cite{BenYaacov2008}*{Section~8} for the definition). When the fragment $F$ is clear from the context, we will simply write $\partial$ instead of $\partial_F$.

We check that $\partial$ is symmetric. Suppose that $\partial(p, q) \leq s$ and fix $\phi \in F$, and a realization $\bar a \in A^n$ of $q$. Let $\psi(\bar x) = \inf_{\bar y}\, (d(\bar y, \bar x) \dotminus s) \vee |\phi(\bar y) - \phi(q)|$ and suppose, for contradiction, that $\psi(p) = r > 0$. Using \eqref{eq:type-distance} for $\psi$, we obtain that for every $\eps > 0$, there exists $\bar a' \in A^n$ with $d(\bar a, \bar a') < s + \eps$ and $\psi(\bar a') > r - \eps$. However, $\psi(\bar a') \leq d(\bar a, \bar a') \dotminus s$ (as $\phi(\bar a) = \phi(q)$), which yields a contradiction for $\eps < r/2$.

Next we verify the triangle inequality. Suppose that $\partial(p_1, p_2) \leq s_1$ and $\partial(p_2, p_3) \leq s_2$ in order to show that $\partial(p_1, p_3) \leq s_1 + s_2$. For simplicity of notation, we assume that $n=1$. Let $\phi \in F$ be a formula such that $\phi(p_1) = 0$. We know that
\begin{equation*}
  p_2(y) \models \qinf_w (d(y, w) \dotminus s_1) \vee \phi(w) = 0,
\end{equation*}
whence
\begin{equation*}
  p_3(x) \models \qinf_y (d(x, y) \dotminus s_2) \vee \big( \qinf_w (d(y, w) \dotminus s_1) \vee \phi(w) \big) = 0.
\end{equation*}
Simplifying yields
\begin{equation*}
  p_3(x) \models \qinf_w (d(x, w) \dotminus (s_1 + s_2)) \vee \phi(w) = 0,
\end{equation*}
as desired.

Another property of $\partial$ that we will need, easily checked from the definition, is that for any model $A \models T$ and all $\bar a, \bar b \in A^n$,
\begin{equation}
  \label{eq:type-dist-smaller-than-d}
  \partial(\tp \bar a, \tp \bar b) \leq d(\bar a, \bar b).
\end{equation}

Finally, we check that if $\phi$ is an $n$-ary $\Omega$-formula that is in $F$, $A, B \models T$, and $\bar a \in A^n, \bar b \in B^n$, then
\begin{equation}
  \label{eq:type-dist-dominates}
  |\phi(\bar a) - \phi(\bar b)| \leq \partial(\tp \bar a, \tp \bar b).
\end{equation}
Indeed, suppose that $\partial(\tp \bar a, \tp \bar b) \leq s$ and fix a formula $\phi$. Then for every $\eps > 0$, there exists $\bar b' \in B^n$ such that $d(\bar b', \bar b) < s + \eps$ and $|\phi(\bar b') - \phi(\bar a)| < \eps$. By the choice of the metric $d$ on products, $\phi$ is contractive in $d$, so we obtain that $|\phi(\bar a) - \phi(\bar b)| \leq s + 2 \eps$, which is enough.

If $X \sub \tS_n(T)$ and $\delta > 0$, denote by $(X)_\delta$ the \df{$\delta$-fattening} of $X$:
\begin{equation*}
  (X)_\delta = \set{p \in \tS_n(T) : \exists q \in X \ \partial(p, q) < \delta}.
\end{equation*}
If $X = \set{p}$ is a singleton, write $(p)_\delta$ instead of $(\set{p})_\delta$. Say that a type $p \in \tS_n(T)$ is \df{principal} if for every $\delta > 0$, $p$ belongs to the interior of $(p)_\delta$ in the logic topology on $\tS_n(T)$. A model $A$ is called \df{$F$-atomic} if every type it realizes is principal (for $\Th(A)$). One of the important properties of atomic models is that they are homogeneous.
\begin{prop}
  \label{p:atomic-homog}
  Let $F$ be a fragment, let $A$ be an $F$-atomic model, and $\bar u, \bar v \in A^k$. Then
  \begin{equation*}
    r_\infty(A \bar u, A \bar v) = \partial_F(\tp \bar u, \tp \bar v).
  \end{equation*}
  In particular, $A$ is \df{homogeneous}, i.e., for all $\bar u, \bar v \in A^k$ with $\tp \bar u = \tp \bar v$ and $\eps > 0$, there is an automorphism $f \in \Aut(A)$ such that $d(\bar u, f(\bar v)) < \eps$.
\end{prop}
\begin{proof}
  The inequality $\partial \leq r_\infty$ follows from Theorem~\ref{th:UniversalRInfty} and \eqref{eq:type-dist-smaller-than-d}. For the other direction, suppose that $\partial(\tp \bar u, \tp \bar v) < t$. We will build inductively tail-dense sequences $a, b \in A^\N$ that satisfy $a|_k = \bar u$, $b|_k = \bar v$, and
  \begin{equation}
    \label{eq:homog-bf}
    \partial \big( \tp(a|_n), \tp(b|_n) \big) < t \quad \text{ for all } n.
  \end{equation}
  We start by setting $a|_k = \bar u$ and $b|_k = \bar v$. The rest of the construction is carried by a back-and-forth argument of which we only describe the forth step. Suppose that $a|_n$, $b|_n$ are given such that $\partial(\tp a|_n, \tp b|_n) < s < t$ and let $a_n \in A$ be arbitrary. We will find $b_n \in A$ such that $\partial(\tp a|_{n+1}, \tp b|_{n+1}) < t$. As $p = \tp(a|_{n+1})$ is principal, there exists a formula $\phi$ taking non-negative values such that $\phi(a|_{n+1}) = 0$ and $\phi(\bar w) < 1$ implies that $\partial(\tp \bar w, p) < (t - s)/2$. As $\partial\big(\tp(a|_n), \tp(b|_n)\big) < s$, by \eqref{eq:type-distance}, we have that
  \begin{equation*}
    A \models \qinf_{\bar y} \big( (d(\bar y, b|_n) \dotminus s) \vee \inf_z \phi(\bar y, z) \big) = 0.
  \end{equation*}
  Let $\eps > 0$ be arbitrary and let $\bar c \in A^n$ and $d \in A$ be such that $d(b|_n, \bar c) < s + \eps$ and $\phi(\bar cd) < 1$. Then $\partial(\tp(\bar cd), p) < (t - s)/2$ and we have
  \begin{equation*}
    \begin{split}
      \partial(p, \tp(b|_n d)) &\leq \partial(p, \tp(\bar cd)) + \partial(\tp(\bar cd), \tp(\bar bd)) \\
      &\leq (t - s)/2 + s + \eps,
    \end{split}
  \end{equation*}
  which is less than $t$ as long as $\eps < (t-s)/2$. This means that we can take $b_n = d$.

  Now it only remains to observe that \eqref{eq:type-dist-dominates} implies that
  \begin{equation*}
    r_0(a|_n, b|_n) \leq \partial \big( \tp(a|_n), \tp(b|_n) \big)
  \end{equation*}
  and apply Proposition~\ref{p:BackAndForth}.
\end{proof}

The following proposition bounds the Scott rank of $F$-atomic models.
\begin{prop}
  \label{p:typed-rinfty}
  Let $F$ be a fragment such that the quantifier rank of formulas in $F$ is bounded by $\alpha$. Let $A$ be an $F$-atomic model. Then for every $\bar u, \bar v \in A^k$,
  \begin{equation*}
    \partial_F(\tp \bar u, \tp \bar v) = r_\alpha(A\bar u, A\bar v) = r_\infty(A\bar u, A\bar v).
  \end{equation*}
  In particular, the Scott rank of $A$ is at most $\alpha$.
\end{prop}
\begin{proof}
  In view of Proposition~\ref{p:atomic-homog}, we only need to prove that $\partial(\tp \bar u, \tp \bar v) \leq r_\alpha(\bar u, \bar v)$.
 Suppose that $r_\alpha(\bar u, \bar v) \leq s$ and let $\eps > 0$ be arbitrary. As $\tp \bar v$ is principal, there exists an $n$-ary $\Omega$-formula $\psi \in F$ and $\delta > 0$ such that $\psi(\bar v) = 0$ and $\psi(q) < \delta \implies \partial(q, \tp \bar v) < \eps$. Let $M > s/\delta$ and let
  \begin{equation*}
    \phi(\bar x) = \qinf_{\bar z} d(\bar x, \bar z) \vee M \cdot \psi(\bar z), 
  \end{equation*}
  where the variables $\bar z$ are taken in such a way that $\phi$ is an $n$-ary $\Omega$-formula (this can be done by Proposition~\ref{p:universal-mod}). Then $\phi(\bar v) = 0$ and as $\phi \in F$, by Theorem~\ref{th:r_alpha-qr}, we have that $\phi(\bar u) \leq s$. Then there exists $\bar w \in A^k$ such that $d(\bar u, \bar w) \leq s + \eps$ and $\psi(\bar w) < \delta$. Thus
  \begin{equation*}
    \begin{split}
    \partial(\tp \bar u, \tp \bar v) &\leq \partial(\tp \bar u, \tp \bar w) + \partial(\tp \bar w, \tp \bar v) \\
    &\leq d(\bar u, \bar w) + \partial(\tp \bar w, \tp \bar v) \\
    &\leq s + 2\eps.    
    \end{split}
  \end{equation*}
  As $\eps$ was arbitrary, this completes the proof.
\end{proof}

The following omitting types theorem will allow us to deduce that $F$-categorical models are $F$-atomic. The version for infinitary continuous logic was proved in Eagle~\cite{Eagle2014}*{Theorem~4.14}.
\begin{theorem}[Omitting types]
  \label{th:omitting-types}
  Let $T$ be a consistent $F$-theory and $p \in \tS_n(T)$ be a type which is not principal. Then there exists a separable model of $T$ which omits $p$.
\end{theorem}
Eagle's definition of a \df{metrically principal} type \cite{Eagle2014}*{Definition~4.12} which is used in his \cite{Eagle2014}*{Theorem~4.14} is not quite the same as our definition of a principal type. The following lemma shows that they are equivalent.
\begin{lemma}
  \label{l:principal-type}
  Let $p \in \tS_n(T)$. Then the following are equivalent:
  \begin{enumerate}
  \item \label{i:l:principal-type-1} $p$ is principal;
  \item \label{i:l:principal-type-2} for every $\delta > 0$, $(p)_\delta$ has non-empty interior in the logic topology.
  \end{enumerate}
\end{lemma}
\begin{proof}
  We only need to prove \ref{i:l:principal-type-2} $\Rightarrow$ \ref{i:l:principal-type-1}. Fix $1 > \delta > 0$ and suppose that $q_0$ is in the interior of $(p)_{\delta/2}$, i.e., there exists a formula $\phi(\bar x)$ such that $\phi(q_0) = 0$ and $\phi(q) < 1 \implies \partial(q, p) < \delta/2$. Let
  \begin{equation*}
    \psi(\bar x) = \qinf_{\bar y} \phi(\bar y) \vee (d(\bar x, \bar y) \dotminus \delta/2).
  \end{equation*}
  First we check that $\psi(p) = 0$. Let $\bar a \models p$. Then, as $\partial(p, q_0) < \delta/2$ and $\phi(q_0) = 0$, we have that $\psi(p) = 0$. Next we verify that $\psi(q) < \delta/2 \implies q \in (p)_\delta$, thus showing that $p$ is in the interior of $(p)_\delta$. Suppose that $\psi(q) < \delta/2$ and let $\bar a \models q$. Then there exists $\bar b$ such that $\phi(\bar b) < \delta/2 < 1$ and $d(\bar a, \bar b) < \delta/2$.  In particular, $\partial(p, tp b) < \delta/2$. We have
  \begin{equation*}
    \partial(p, q) \leq \partial(p, \tp \bar b) + \partial(\tp \bar b, \tp \bar a) < \delta/2 + \delta/2 = \delta. \qedhere
  \end{equation*}
\end{proof}

Combining everything we have so far, we obtain the following theorem.
\begin{theorem}
  \label{th:Scott-rank-categorical}
   Let $F$ be a fragment such that the quantifier rank of formulas in $F$ is bounded by $\alpha$ and let $T$ be an $F$-theory which has a unique separable model $A$. Then the Scott rank of $A$ is at most $\alpha$.
 \end{theorem}
 \begin{proof}
   By Theorem~\ref{th:omitting-types}, $A$ is $F$-atomic: if $\tp \bar a$ is non-principal for some $\bar a \in A^n$, there exists a separable model of $T$ that omits it and is therefore not isomorphic to $A$. Now Proposition~\ref{p:typed-rinfty} implies the conclusion.
 \end{proof}

We are finally ready to prove the main theorem of this section.
\begin{theorem}
  \label{th:Borel-isom}
  Let $L$ be a countable language, let $\cong$ denote the isomorphism relation on $\cM(L)$ and let $X \sub \cM(L)$ be an $\cong$-invariant Borel subset. Let $\Omega$ be a universal modulus for $L$. Then the following are equivalent:
  \begin{enumerate}
  \item \label{i:th:Borel-isom-1} $\cong|_X$ is Borel;
  \item \label{i:th:Borel-isom-2} The $\Omega$-Scott rank of elements of $X$ is uniformly bounded below $\omega_1$.
  \end{enumerate}
\end{theorem}
\begin{proof}
  \begin{cycprf}
  \item[\impnext] Suppose that $\cong|_X$ is $\bPi^0_\alpha$ for some $\alpha < \omega_1$. Then each isomorphism class contained in $X$ is $\bPi^0_\alpha$; by Theorem~\ref{th:Lopez-Escobar}, for every $A \in X$, there exists a sentence $\psi_A$ of quantifier rank at most $\alpha' = \omega \cdot 2 \cdot (2 + \alpha + 1)$ such that $\psi_A^B = 0$ if $A \cong B$ and $\psi_A^B = 1$ otherwise. Now for each $A \in X$, apply Theorem~\ref{th:Scott-rank-categorical} to the fragment generated by $\psi_A$ and the theory $\set{\psi_A = 0}$ to obtain that the rank of $A$ is at most $\alpha'$.

  \item[\impfirst] Suppose that the Scott rank of all structures in $X$ is bounded by $\alpha < \omega_1$. Then the Scott sentence of each of those structures has quantifier rank at most $\alpha + \omega$. By Theorems \ref{th:Scott-sentence} and \ref{th:r_alpha-qr}, for $A, B \in X$, $A \eqrel{E_{\alpha + \omega}} B$ iff $A \eqrel{E_\infty} B$ and by Theorem~\ref{th:UniversalRInfty}, $A \eqrel{E_\infty} B$ iff $A \cong B$. Thus, on $X$, the Borel relation $E_{\alpha + \omega}$ coincides with isomorphism.
  \end{cycprf}
\end{proof}

\begin{example} \label{ex:proper metric} Recall that a complete metric space is called \df{proper} (or Heine--Borel) if all closed bounded sets are compact. The Euclidean spaces and, more generally, all complete Riemannian manifolds  are examples of proper metric spaces.   Clearly it suffices to require the Heine--Borel condition for closed balls centered at points in a countable dense sequence.
Then, since a complete metric space is compact iff it is totally bounded, properness is a  Borel property   of Polish metric spaces.

By a theorem of Hjorth (see \cite[Thm.\ 3]{Gao.Kechris:03}), isometry of proper metric spaces is Borel bireducible with the universal countable Borel equivalence relation, and in particular Borel. Hence by Theorem~\ref{th:Borel-isom}, the Scott rank of proper metric spaces is bounded.
%A similar argument works for locally compact connected Polish metric spaces.
\end{example}

%%%%%%%%%%%%%%%%%%%%%%%%%%%%%%%%%%%%%%%%%%%%%%%%%%

\section{Two examples for the $1$-Lipschitz modulus}
\label{sec:some-examples}

\subsection{Gromov--Hausdorff distance between metric spaces}
\label{sec:grom-hausd-dist}

We apply the tools we have developed in a particular example: calculating the Gromov--Hausdorff distance between metric spaces. Throughout this subsection, we fix a signature $L$ containing only the distance symbol $d$, say, bounded by $1$, and we let $\Omega = \OmegaL$ be the $1$-Lipschitz weak modulus defined by \eqref{eq:OmegaL}. All metric spaces that we consider below have diameter bounded by $1$. (Everything goes through if $1$ is replaced by an arbitrary positive constant.)

Recall that if $(C, d)$ is a metric space and $A, B$ are closed subsets of $C$, the \df{Hausdorff distance} between $A$ and $B$, denoted by $\dH(A, B)$ is defined by
\begin{equation*}
  \dH(A, B) = \sup_{x \in A} d(x, B) \vee \sup_{y \in B} d(y, A).
\end{equation*}
If $(A, d)$ and $(B, d)$ are now abstract metric spaces, the \df{Gromov--Hausdorff} distance  \cite[Def.\ 3.4]{Gromov:07} between $A$ and $B$, denoted by $\dGH(A, B)$, is defined by
\begin{equation*}
  \dGH(A, B) = \inf_{f_1, f_2} \dH(f_1(A), f_2(B)),
\end{equation*}
where $f_1$ and $f_2$ vary over all isometric embeddings of $A$ and $B$ in a third space $C$.

Similarly, we define the \df{enumerated Gromov--Hausdorff distance}  between enumerated metric spaces as follows. Let $A$ and $B$ be metric spaces and $\set{a_i : i \in I}$, $B = \set{b_i : i \in I}$ be sequences of elements of $A$ and $B$, respectively. We define the \emph{enumerated Gromov--Hausdorff distance} $\deGH(a, b)$ as
\begin{equation*}
  \deGH(a, b) = \inf_{f_1,f_2} \, \sup_i d\bigl( f_1(a_i), f_2(b_i) \bigr),
\end{equation*}
where $f_1$ and $f_2$ vary over all isometric embeddings of $\set{a_i : i \in I}$ and $\set{b_i : i \in I}$ in a third metric space $C$.

\begin{theorem}
  \label{thm:ScottLipschitzGromovHausdorff}
  Let $L = \set{d}$ and $\Omega = \OmegaL$ as above. Then for any two metric spaces $A$, $B$ of diameter at most $1$,
  \begin{equation*}
    r_\infty(A, B) = \dGH(A, B).
  \end{equation*}
\end{theorem}

We start by calculating $\deGH$ between finite tuples.
\begin{lemma}
  \label{l:enum-GH}
  Let $A, B$ be metric spaces and let $\bar a \in A^n$, $\bar b \in B^n$ be finite tuples. Then
  \begin{equation*}
    \deGH(\bar a, \bar b) = \half \sup_{i,j} \, \bigl| d(a_i,a_j) - d(b_i,b_j) \bigr| = r_0(A \bar a, B \bar b).
  \end{equation*}
\end{lemma}
\begin{proof}
  The first equality is proved in Uspenskij~\cite{Uspenskij2008}*{Proposition~7.1}; we proceed to show the second.
  As basic formulas are $1$-Lipschitz and they are preserved by embeddings, it is clear  that $r_0(A \bar a, B \bar b) \leq \deGH(\bar a, \bar b)$.
  
  On the other hand, we have
  \begin{equation*}
    \Delta_{\half d(x_i,x_j)}(\delta) = \half (\delta_i+\delta_j) \leq \delta_i \vee \delta_j \leq \OmegaL(\delta),
  \end{equation*}
  so $\half d(x_i,x_j)$ is an $\OmegaL$-formula. This implies the other inequality.
\end{proof}

\begin{proof}[Proof of Theorem~\ref{thm:ScottLipschitzGromovHausdorff}]
  Let $A, B$ be metric spaces and $s \in \R$. We have:
  \begin{equation*}
    \begin{split}
      \dGH(A, B) < s &\iff \exists a \in A^\N, b \in B^\N \text{ tail-dense such that } \deGH(a, b) < s \\
      &\iff \exists a \in A^\N, b \in B^\N \ \sup_n \deGH(a|_n, b|_n) < s \\
      &\iff \exists a \in A^\N, b \in B^\N \ \sup_n r_0(a|_n, b|_n) < s \\
      &\iff r_\infty(A, B) < s.
    \end{split}
  \end{equation*}
  The second equivalence follows by compactness (or, alternatively, from the fact that the first equality in Lemma~\ref{l:enum-GH} also holds for infinite tuples), the third is Lemma~\ref{l:enum-GH}, and the fourth is given by Proposition~\ref{p:BackAndForth}.
\end{proof}

Theorem~\ref{thm:ScottLipschitzGromovHausdorff} and Corollary~\ref{c:classes-Borel} give us the following.
\begin{cor}
  \label{c:dGH-Borel}
  Let $L = \set{d}$ and let $A \in \cM(L)$. Then the set
  \begin{equation*}
    \set{B \in \cM(L) : \dGH(A, B) = 0}
  \end{equation*}
  is Borel.
\end{cor}

\begin{question}
  \label{q:dGH-Borel}
  Let $\cM$ be the space of Polish metric spaces with distance bounded by $1$, as above. Let $A \in \cM$ be fixed. Is $\dGH(A, \cdot)$ a Borel function on $\cM$?
\end{question}

\begin{remark}
  \label{rem:EGH}
  It is not clear what the exact complexity of the equivalence relation $E_{\mathrm{GH}}$ ($\dGH = 0$ on bounded metric spaces) is. However, Christian Rosendal pointed out to us that it is above the universal orbit equivalence relation of a Polish group action. This can be seen as follows. For a compact metrizable space $X$, denote by $C(X)$ the space of continuous functions on $X$ equipped with the $\sup$ norm. Dutrieux and Kalton show in \cite{Dutrieux2005} that if $X$ and $Y$ are not homeomorphic, then $\dGH(C(X), C(Y)) \geq 1/16$. Thus the map $X \mapsto C(X)$ is a reduction from homeomorphism of compact spaces to $E_{\mathrm{GH}}$. As the former is universal for orbit equivalence relations of Polish group actions (Zielinski~\cite{Zielinski2016}), this yields the claim.

  One way to see that this reduction is Borel is as follows. In \cite{Zielinski2016}, compact metrizable spaces are parametrized by the elements of the hyperspace $K([0, 1]^\N)$ of closed subspaces of the Hilbert cube. An easy way to find a Borel map $\Phi \colon K([0, 1]^\N) \to \cM$ such that $\Phi(X)$ is isometric to $C(X)$ is the following. Let $z_i \colon [0, 1]^\N \to [0, 1]$ denote the projection on the $i$-th coordinate. Then instead of $\N$, we can take as a distinguished countable dense set in the definition of $\cM$ the set of all polynomials with rational coefficients in infinitely many variables (all but finitely many coefficients of which are zero), and calculate the distance by
  \begin{equation*}
    d(P_1, P_2) = \sup_{x \in X} |P_1(z_0(x), z_1(x), \ldots) - P_2(z_0(x), z_1(x), \ldots)|,
  \end{equation*}
  which is easily a Borel function of $X$. The Stone--Weierstrass theorem tells us that the resulting element of $\cM$ is isometric to $C(X)$.
\end{remark}

\begin{question}
  \label{q:complexity-EGH}
  Is $E_{\mathrm{GH}}$ Borel reducible to an orbit equivalence relation of a Polish group action?
\end{question}

Next we provide a simple example of two discrete metric spaces that have Gromov--Hausdorff distance $0$ but are not isometric. Note however, that for compact metric spaces as well as for metric spaces with discrete set of distances, $E_{\mathrm{GH}}$ and isometry coincide.
\begin{example}
  Let $A$ be a countable subset of the open interval $(0,1)$. We define a countable metric space $(X_A,d_A)$ as follows: $X_A=\{x_a,y_a:a\in A\}$, and for $x\neq y\in X_A$ we set
  \begin{equation*}
    d_A(x,y)=\begin{cases} a &\text{if }x=x_a,y=y_a \text{ or vice versa}; \\
      1& \text{otherwise.}
    \end{cases}    
  \end{equation*}
  It is easy to check that $(X_A, d_A)$ is a complete, ultrametric space. Now let $A, B \subseteq (0,1)$ be two distinct, dense, countable subsets of $(0,1)$. Then clearly $X_A$ and $X_B$ are not isometric but $\dGH(X_A, X_B) = 0$. Indeed, fix $\eps > 0$ and let  $f \colon A \to B$ be a bijection such that for all $a \in A$, $|f(a) - a| < \eps$. Let $Y$ be the disjoint union of $X_A$ and $X_B$ and define the distance on $Y$ by $d(x_a, y_a) = a$, $d(x_b, y_b) = b$, for $a \in A$, $b \in B$; $d(x_a, x_{f(a)}) = d(y_a, y_{f(a)}) = \eps$, $d(x_a, y_{f(a)}) = d(y_a, x_{f(a)}) = \min ((a+b)/2+\eps, 1)$ for all $a \in A$ and $d(x, y) = 1$ in all other cases. This $Y$ witnesses that $\dGH(X_A, X_B) \leq \eps$.
\end{example}

%%%%%%%%%%%%%%%%%%%%%%%%%%%%%%%%%%%%%%%%%%%%%%%%%%

\subsection{Kadets distance between Banach spaces}
\label{sec:banach-spaces}

Our second example is Banach spaces. Here the language is $\set{0, +, \nm{\cdot}_n} \cup \set{M_\lambda : \lambda \in \R}$, where $0$ is a constant symbol, $+$ is a binary operation with $\Delta_+(\delta_1, \delta_2) = \delta_1 + \delta_2$, $\nm{\cdot}_n$ is a $1$-Lipschitz predicate symbol (interpreted as $\nm{x}_n = \nm{x} \wedge n$), and $M_\lambda$ is a function symbol representing multiplication by $\lambda$ with $\Delta_{M_\lambda}(\delta_0) = |\lambda| \delta_0$. We will denote by $\cM$ (and restrict our considerations to) the Borel set of all separable Banach spaces rather than the space of all structures in this signature. Note that this is not relevant for the definition of the pseudo-distances $r_\alpha$: they are computed in a manner independent of the ambient space. We can also restrict to a countable sublanguage by only keeping $M_\lambda$ for $\lambda \in \Q$ without loss of generality. The distance function is given by $d(x, y) = \nm{x - y}_1$.

Every atomic formula is equivalent (on $\cM$) to a formula of the type $\nm{\sum_{i = 0}^{n-1} \lambda_i x_i}_k$ for some $k, n \in \N$, $\lambda_i \in \R$. An easy calculation yields that the modulus of continuity of this formula is $\Delta(\delta) = \sum_{i = 0}^{n-1} |\lambda_i|\delta_i$. In particular, it obeys $\OmegaL$ iff $\sum_i |\lambda_i| \leq 1$.

Let $A, B$ be two Banach spaces. The Kadets distance between $A$ and $B$ is defined analogously to the Gromov--Hausdorff distance by
\begin{equation*}
\dK(A, B) = \inf_{f_1, f_2} \dH(f_1(\cB_A), f_2(\cB_B)),
\end{equation*}
where $\cB_A$ and $\cB_B$ denote the unit balls of $A$ and $B$ and $f_1$ and $f_2$ vary over all \emph{linear} isometric embeddings of $A$ and $B$ in a third Banach space $C$.

Let now $A$ and $B$ be two Banach spaces and $\bar a$ and $\bar b$ be two sequences in $\cB_A$ and $\cB_B$ with dense span. The \emph{enumerated Kadets distance} is defined as
\begin{equation*}
\deK(A \bar a, B \bar b) = \inf_{f_1, f_2}\sup_i \nm{f_1(a_i) - f_2(b_i)}_C,
\end{equation*}
where $f_1$ and $f_2$ vary over all linear isometric embeddings of $A$ and $B$ in a third space $C$. The following is the analogue of Lemma~\ref{l:enum-GH} for Banach spaces. 
\begin{lemma}
  \label{l:enum-Kadets}
  Let $A$ and $B$ be Banach spaces and $\bar a \in \cB_A^n$, $\bar b \in \cB_B^n$ be finite tuples. Then
  \begin{equation*}
    \begin{split}
      \deK(A\bar a, B \bar b) &= \sup\{\big| \nm{\sum_i \lambda_i a_i} - \nm{\sum_i \lambda_i b_i} \big| :  \sum_i |\lambda_i| \leq 1\} \\
      &= r_0(A \bar a, B \bar b).      
    \end{split}
  \end{equation*}
\end{lemma}
\begin{proof}
  The first equality is proved in \cite{BenYaacov2014}*{Fact~3.4}. (There, the $\sup$ is taken over all $\lambda_i$ with $\sum \lambda_i = 1$; however, scaling up in order to make the sum equal to $1$ only increases the values inside the $\sup$.) For the second, exactly as in Lemma~\ref{l:enum-GH}, it suffices to recall that the formula $\nm{\sum_i \lambda_i x_i}$ obeys $\OmegaL$ if $\sum_i{|\lambda_i|} \leq 1$.
\end{proof}

In the same way as before, we obtain the following theorem.
\begin{theorem}
  \label{th:Kadets}
  For any two separable Banach spaces $A$ and $B$, we have
  \begin{equation*}
    r^{\OmegaL}_\infty(A, B) = \dK(A, B).
  \end{equation*}
\end{theorem}

Question~\ref{q:dGH-Borel}, Remark~\ref{rem:EGH}, and Question~\ref{q:complexity-EGH} also apply to the Kadets distance.

%%%%%%%%%%%%%%%%%%%%%%%%%%%%%%%%%%%%%%%%%%%%%%%%%%

\section{A characterization of CLI Polish groups}
\label{sec:char-cli-polish}

Recall that a Polish group is called a \df{CLI group} if it admits a complete, compatible, left-invariant metric. In \cite{Gao1998}, Gao proves that if $M$ is a classical countable structure, the automorphism group $\Aut(M)$ is a CLI group iff the classical Scott sentence of $M$ does not admit uncountable models. In this section, we note that a similar result holds for general separable structures.

We start with a simple lemma (well-known in the classical setting) that establishes the connection between the left completion of $\Aut(M)$ and $\Lomo$-logic.
\begin{lemma}
  \label{l:left-completion}
  Let $M$ be a separable metric structure and $\Aut(M)$ be its automorphism group equipped with the pointwise convergence topology. Then the left completion of $\Aut(M)$ can be identified with the monoid of $\Lomo$-elementary embeddings of $M$ into itself. In particular, $\Aut(M)$ is a CLI group iff every such elementary embedding is surjective.
\end{lemma}
\begin{proof}
  The left uniformity of $\Aut(M)$ is defined by the pseudo-distances $\rho_a$ given by $\rho_a(g, h) = d(g \cdot a, h \cdot a)$ for $a \in M$. If $(g_n)_n$ is a left-Cauchy sequence in $\Aut(M)$, then $g_n \cdot a$ converges for every $a \in M$; denote by $f$ the limit map. As the interpretation of any $\Lomo$-formula $\phi(\bar x)$ is continuous, we have that
  \begin{equation*}
    \phi^M(f(\bar a)) = \lim_n \phi^M(g_n \cdot \bar a) = \phi^M(\bar a)
  \end{equation*}
  for any tuple $\bar a \in M^k$. We conclude that $f$ is elementary.

  Conversely, let $f \colon M \to M$ be $\Lomo$-elementary; we will show that it can be arbitrarily well approximated by automorphisms. Let $\bar a \in M^k$ be any tuple. By elementarity and Theorem~\ref{th:Scott-sentence}, we have that $r^{\OmegaU(L)}_\infty(M\bar a, Mf(\bar a)) = 0$; now Theorem~\ref{th:UniversalRInfty} implies that for any $\eps > 0$, there exists an automorphism $g$ of $M$ such that $d(f(\bar a), g \cdot \bar a) < \eps$.

  For the second claim of the lemma, note that $\Aut(M)$ is CLI iff it coincides with its left completion.
\end{proof}

\begin{theorem}
  \label{th:Gao}
  Let $M$ be a separable structure in a countable language. Then the following are equivalent:
  \begin{enumerate}
  \item The Scott sentence of $M$ is non-zero on non-separable models;
  \item \label{i:Gao:CLI} $\Aut(M)$ is a CLI group.
  \end{enumerate}
\end{theorem}
\begin{proof}
  The proof is similar to the one in \cite{Gao1998}, so we only give a brief sketch. By Lemma~\ref{l:left-completion}, we can replace \ref{i:Gao:CLI} by the condition that every $\Lomo$-elementary embedding of $M$ into itself is surjective.
  
  \begin{cycprf}
  \item[\hspace{0.6\itemindent} \impnext] Suppose that $f \colon M \to M$ is an elementary embedding which is not surjective. By iterating $f$ $\omega_1$ times, we obtain an elementary chain every element of which is isomorphic to $M$; its union is non-separable and elementarily equivalent to $M$.
    
  \item[\impfirst] Conversely, suppose that $N$ is a non-separable model elementarily equivalent to $M$. Let $F$ denote the fragment of $\Lomo$ generated by the Scott sentence of $M$. Then by the downward Löwenheim--Skolem theorem, $M$ can be embedded $F$-elementarily into $N$. Let $a \in N \sminus M$ be arbitrary. Again by Löwenheim--Skolem, there exists a separable $M' \prec_F N$ such that $M \cup \set{a} \sub M'$. Then $M' \cong M$ (as $M'$ satisfies the Scott sentence of $M$) and the embedding $M \sub M'$ is not surjective.
  \end{cycprf}
\end{proof}

 As isometry groups of proper metric spaces (see Example~\ref{ex:proper metric} for the definition) are locally compact, and locally compact groups are CLI, we have the following corollary.
\begin{cor}
  \label{c:proper-metrics}
  Let $M$ be a separable structure such that the underlying metric space is proper. Then $M$ is the unique model of its  Scott sentence.
\end{cor}
As noted in \cite{Gao1998}, Hjorth and Solecki proved that all solvable groups are CLI. This  provides a further setting where Theorem~\ref{th:Gao} applies.

%%%%%%%%%%%%%%%%%%%%%%%%%%%%%%%%%%%%%%%%%%%%%%%%%%

\section{Connections with classical logic}
\label{sec:classical-logic}

One might ask about the connection between the continuous $\Lomo$-logic considered in this paper and classical ($\set{0, 1}$-valued) $\Lomo$-logic with regard to Polish metric structures. The latter has already been considered in the literature (see \cite{Doucha2014} and the references therein). The main difference is that, while continuous logic treats separable structures as essentially countable objects, in classical logic, which does not allow for approximations, they are basically discrete structures of size continuum and as a result, quite unmanageable from a descriptive set theoretic point of view. Classically $\Lomo$-definable sets of Polish structures are usually not Borel sets. For a natural example, take local compactness of Polish metric spaces: by \cite{Nies.Solecki:15}, the class of such spaces is properly co-analytic, and at the same time definable in classical $\Lomo$.

In this section, we observe that, with appropriate coding, the expressive power of classical $\Lomo$-logic is strictly greater than that of continuous $\Lomo$-logic. Each continuous signature $L$ yields a classical signature $\hat L$ defined as follows. For every predicate symbol $P$ of arity $n_P$ and for every $q \in \Q$, we put into $L'$ an $n_P$-ary relation symbol $P_q$. (For simplicity, we assume that the language $L$ has no function symbols; as we saw in Section~\ref{sec:space-polish-struct}, this entails no loss of generality.) To every $L$-structure $A$, we associate a classical $\hat L$-structure $\hat A$ with the same domain, where the interpretations of the symbols are given by:
\begin{equation*}
\hat  A \models P_q(\bar a) \iff P^A(\bar a) < q
\end{equation*}
for each $q\in \Q$ and $\bar a \in A^n$. 

\begin{prop}
  \label{p:classical-Lomo}
  For every $n$-ary $\Lomo(L)$-formula $\phi(\bar x)$ and every $q \in \Q$, there exists a formula $\hat \phi_q(\bar x)$ in  $\Lomo(\hat L)$ such that for every $L$-structure $A$, $\bar a \in A^n$ and each $q$, we have
  \begin{equation*}
    \hat A \models \hat \phi_q(\bar a) \iff \phi^A(\bar a) < q.
  \end{equation*}
  Moreover, the quantifier rank of $\hat \phi_q$ is equal to that of $\phi$.
\end{prop}
\begin{proof}
  First note that if we have formulas $\hat \phi_q$ for every $q$, then we can easily find a formula $\hat \phi^q$ that expresses $\phi > q$, namely $\hat \phi^q = \bigvee_{p > q} \neg \hat \phi_p$.
  
  The proof of the proposition proceeds by induction on $\phi$. For atomic formulas this is true by definition. If $\phi(\bar x) = \qinf_y \psi(\bar x, y)$, we can take $\hat \phi_q(\bar x) = \exists y \ \psi_q(\bar x, y)$. If $\phi = \bigwedge_i \psi_i$, we can take $\phi_q = \bigvee_i \psi_{i, q}$.

  Finally, suppose that $\phi = f(\psi_0, \ldots, \psi_{k-1})$, where $f \colon \R^k \to \R$ is a connective. Write
  \begin{equation*}
    f^{-1}\big((-\infty, q)\big) = \bigcup_{i \in \N} \prod_{j < k} (p_{i, j}, q_{i, j}),
  \end{equation*}
  where $p_{i, j}, q_{i, j} \in \Q$. This can be done because $f^{-1}\big((-\infty, q)\big)$ is an open set and products of rational intervals form a basis for $\R^k$. Then we can take
  \begin{equation*}
    \hat \phi_q = \bigvee_i \bigwedge_{j < k} \hat \psi_j^{p_{i, j}} \land \hat \psi_{j, q_{i, j}}. \qedhere
  \end{equation*}
\end{proof}

Thus, using Theorem~\ref{th:Lopez-Escobar}, one obtains for every invariant Borel set of models a classical sentence that describes it. However, there is no converse to this: the set of models of a classical $\Lomo$-sentence is in general not Borel, as mentioned above. From Corollary~\ref{c:iso-classes-Borel}, we obtain the following.
\begin{cor}
  \label{c:classical-Scott-sentence}
  For every separable structure $A$, there exists a sentence $\sigma_A$ in classical $\Lomo$-logic such that for every separable structure B,
  \begin{equation*}
    \hat B \models \sigma_A \iff B \cong A.
  \end{equation*}
\end{cor}

Finally, note that the classical Scott rank (as discussed in \cite{Doucha2014}) and the continuous Scott rank from this paper are different.
For example, consider the structure  $G_L$ from Remark~\ref{rem:non-closed-orbits}: as it is ultrahomogeneous, its continuous Scott rank is $0$. However, as orbits of the automorphism group are not closed, its classical Scott rank is non-zero. It was proved in \cite{Doucha2014} that the classical Scott rank of Polish metric spaces is at most $\omega_1$ but it remains an open question whether it must be countable.

%\bibliography{refs}
% \bib, bibdiv, biblist are defined by the amsrefs package.
\begin{bibdiv}
\begin{biblist}

\bib{BenYaacov2008}{incollection}{
      author={{Ben Yaacov}, Ita{\"{\i}}},
      author={Berenstein, Alexander},
      author={Henson, C.~Ward},
      author={Usvyatsov, Alexander},
       title={Model theory for metric structures},
        date={2008},
   booktitle={Model theory with applications to algebra and analysis. {V}ol.
  2},
      series={London Math. Soc. Lecture Note Ser.},
      volume={350},
   publisher={Cambridge Univ. Press},
     address={Cambridge},
       pages={315\ndash 427},
         url={http://dx.doi.org/10.1017/CBO9780511735219.011},
}

\bib{BenYaacov2009a}{article}{
      author={{Ben Yaacov}, Ita{\"{\i}}},
      author={Iovino, Jos{\'e}},
       title={Model theoretic forcing in analysis},
        date={2009},
        ISSN={0168-0072},
     journal={Ann. Pure Appl. Logic},
      volume={158},
      number={3},
       pages={163\ndash 174},
         url={http://dx.doi.org/10.1016/j.apal.2007.10.011},
}

\bib{BenYaacov2008a}{article}{
      author={Ben~Yaacov, Ita{\"{\i}}},
       title={Topometric spaces and perturbations of metric structures},
        date={2008},
        ISSN={1863-3617},
     journal={Log. Anal.},
      volume={1},
      number={3-4},
       pages={235\ndash 272},
         url={http://dx.doi.org/10.1007/s11813-008-0009-x},
}

\bib{BenYaacov2013b}{article}{
      author={Ben~Yaacov, Ita{\"{\i}}},
       title={Lipschitz functions on topometric spaces},
        date={2013},
        ISSN={1759-9008},
     journal={J. Log. Anal.},
      volume={5},
       pages={Paper 8, 21},
}

\bib{BenYaacov2014}{article}{
      author={Ben~Yaacov, Ita{\"{\i}}},
       title={The linear isometry group of the {G}urarij space is universal},
        date={2014},
        ISSN={0002-9939},
     journal={Proc. Amer. Math. Soc.},
      volume={142},
      number={7},
       pages={2459\ndash 2467},
         url={http://dx.doi.org/10.1090/S0002-9939-2014-11956-3},
}

\bib{Caicedo2014}{article}{
      author={Caicedo, Xavier},
      author={Iovino, Jos{\'e}~N.},
       title={Omitting uncountable types and the strength of {$[0,1]$}-valued
  logics},
        date={2014},
        ISSN={0168-0072},
     journal={Ann. Pure Appl. Logic},
      volume={165},
      number={6},
       pages={1169\ndash 1200},
         url={http://dx.doi.org/10.1016/j.apal.2014.01.005},
      review={\MR{3183322}},
}

\bib{Coskey2016}{article}{
      author={Coskey, Samuel},
      author={Lupini, Martino},
       title={A {L}\'opez-{E}scobar theorem for metric structures, and the
  topological {V}aught conjecture},
        date={2016},
        ISSN={0016-2736},
     journal={Fund. Math.},
      volume={234},
      number={1},
       pages={55\ndash 72},
         url={http://dx.doi.org/10.4064/fm135-1-2016},
      review={\MR{3509816}},
}

\bib{Doucha2014}{article}{
      author={Doucha, Michal},
       title={Scott rank of {P}olish metric spaces},
        date={2014},
        ISSN={0168-0072},
     journal={Ann. Pure Appl. Logic},
      volume={165},
      number={12},
       pages={1919\ndash 1929},
         url={http://dx.doi.org/10.1016/j.apal.2014.08.002},
        note={Erratum \cite{Doucha2017}},
}

\bib{Doucha2017}{article}{
      author={Doucha, Michal},
       title={Erratum to: ``{S}cott rank of {P}olish metric spaces'' [{A}nn.
  {P}ure {A}ppl. {L}ogic 165 (12) (2014) 1919--1929] [ {MR}3256743]},
        date={2017},
        ISSN={0168-0072},
     journal={Ann. Pure Appl. Logic},
      volume={168},
      number={7},
       pages={1490},
         url={http://dx.doi.org/10.1016/j.apal.2017.03.006},
      review={\MR{3638899}},
}

\bib{Dutrieux2005}{article}{
      author={Dutrieux, Yves},
      author={Kalton, Nigel~J.},
       title={Perturbations of isometries between {$C(K)$}-spaces},
        date={2005},
        ISSN={0039-3223},
     journal={Studia Math.},
      volume={166},
      number={2},
       pages={181\ndash 197},
         url={http://dx.doi.org/10.4064/sm166-2-4},
      review={\MR{2109588}},
}

\bib{Eagle2014}{article}{
      author={Eagle, Christopher~J.},
       title={Omitting types for infinitary {$[0,1]$}-valued logic},
        date={2014},
        ISSN={0168-0072},
     journal={Ann. Pure Appl. Logic},
      volume={165},
      number={3},
       pages={913\ndash 932},
         url={http://dx.doi.org/10.1016/j.apal.2013.11.006},
      review={\MR{3142393}},
}

\bib{Elliott2013}{article}{
      author={Elliott, G.~A.},
      author={Farah, I.},
      author={Paulsen, V.~I.},
      author={Rosendal, C.},
      author={Toms, A.~S.},
      author={Törnquist, A.},
       title={The isomorphism relation for separable {C}*-algebras},
        date={2013},
     journal={Math. Res. Lett.},
      volume={20},
      number={6},
       pages={1071\ndash 1080},
}

\bib{Folland1995}{book}{
      author={Folland, Gerald~B.},
       title={A course in abstract harmonic analysis},
      series={Studies in Advanced Mathematics},
   publisher={CRC Press},
     address={Boca Raton, FL},
        date={1995},
        ISBN={0-8493-8490-7},
}

\bib{Farah2014}{article}{
      author={Farah, Ilijas},
      author={Hart, Bradd},
      author={Sherman, David},
       title={Model theory of operator algebras {II}: model theory},
        date={2014},
        ISSN={0021-2172},
     journal={Israel J. Math.},
      volume={201},
      number={1},
       pages={477\ndash 505},
         url={http://dx.doi.org/10.1007/s11856-014-1046-7},
}

\bib{Gao1998}{article}{
      author={Gao, Su},
       title={On automorphism groups of countable structures},
        date={1998},
        ISSN={0022-4812},
     journal={J. Symbolic Logic},
      volume={63},
      number={3},
       pages={891\ndash 896},
}

\bib{Gao2009a}{book}{
      author={Gao, Su},
       title={Invariant descriptive set theory},
      series={Pure and Applied Mathematics (Boca Raton)},
   publisher={CRC Press, Boca Raton, FL},
        date={2009},
      volume={293},
        ISBN={978-1-58488-793-5},
}

\bib{Gromov:07}{book}{
      author={Gromov, Misha},
       title={Metric structures for {R}iemannian and non-{R}iemannian spaces},
     edition={English},
      series={Modern Birkh\"auser Classics},
   publisher={Birkh\"auser Boston Inc.},
     address={Boston, MA},
        date={2007},
        ISBN={978-0-8176-4582-3; 0-8176-4582-9},
        note={Based on the 1981 French original, With appendices by M. Katz, P.
  Pansu and S. Semmes, Translated from the French by Sean Michael Bates},
      review={\MR{2307192 (2007k:53049)}},
}

\bib{Gao.Kechris:03}{article}{
      author={Gao, Su},
      author={Kechris, Alexander~S.},
       title={On the classification of {P}olish metric spaces up to isometry},
        date={2003},
        ISSN={0065-9266},
     journal={Mem. Amer. Math. Soc.},
      volume={161},
      number={766},
       pages={viii+78},
      review={\MR{1950332 (2004b:03067)}},
}

\bib{Hjorth2000}{book}{
      author={Hjorth, Greg},
       title={Classification and orbit equivalence relations},
      series={Mathematical Surveys and Monographs},
   publisher={American Mathematical Society},
     address={Providence, RI},
        date={2000},
      volume={75},
        ISBN={0-8218-2002-8},
}

\bib{Hjorth1996}{article}{
      author={Hjorth, Greg},
      author={Kechris, Alexander~S.},
       title={Borel equivalence relations and classifications of countable
  models},
        date={1996},
        ISSN={0168-0072},
     journal={Ann. Pure Appl. Logic},
      volume={82},
      number={3},
       pages={221\ndash 272},
}

\bib{Hjorth1998}{article}{
      author={Hjorth, Greg},
      author={Kechris, Alexander~S.},
      author={Louveau, Alain},
       title={Borel equivalence relations induced by actions of the symmetric
  group},
        date={1998},
        ISSN={0168-0072},
     journal={Ann. Pure Appl. Logic},
      volume={92},
      number={1},
       pages={63\ndash 112},
}

\bib{Kechris1995}{book}{
      author={Kechris, Alexander~S.},
       title={Classical descriptive set theory},
      series={Graduate Texts in Mathematics},
   publisher={Springer-Verlag},
     address={New York},
        date={1995},
      volume={156},
        ISBN={0-387-94374-9},
}

\bib{Lopez-Escobar1965}{article}{
      author={Lopez-Escobar, E. G.~K.},
       title={An interpolation theorem for denumerably long formulas},
        date={1965},
        ISSN={0016-2736},
     journal={Fund. Math.},
      volume={57},
       pages={253\ndash 272},
}

\bib{Melleray2010}{article}{
      author={Melleray, Julien},
       title={A note on {H}jorth's oscillation theorem},
        date={2010},
        ISSN={0022-4812},
     journal={J. Symbolic Logic},
      volume={75},
      number={4},
       pages={1359\ndash 1365},
         url={http://dx.doi.org/10.2178/jsl/1286198151},
      review={\MR{2767973}},
}

\bib{Nies.Solecki:15}{inproceedings}{
      author={Nies, A.},
      author={Solecki, S.},
       title={Local compactness for computable polish metric spaces is
  {$\mathbf{\Pi^1_1}$}-complete},
organization={Springer},
        date={2015},
   booktitle={Conference on {C}omputability in {E}urope},
       pages={286\ndash 290},
}

\bib{Scott1965}{incollection}{
      author={Scott, Dana},
       title={Logic with denumerably long formulas and finite strings of
  quantifiers},
        date={1965},
   booktitle={Theory of {M}odels ({P}roc. 1963 {I}nternat. {S}ympos.
  {B}erkeley)},
   publisher={North-Holland, Amsterdam},
       pages={329\ndash 341},
      review={\MR{0200133}},
}

\bib{Uspenskij2008}{article}{
      author={Uspenskij, Vladimir},
       title={On subgroups of minimal topological groups},
        date={2008},
        ISSN={0166-8641},
     journal={Topology Appl.},
      volume={155},
      number={14},
       pages={1580\ndash 1606},
         url={http://dx.doi.org/10.1016/j.topol.2008.03.001},
}

\bib{Vaught1974}{article}{
      author={Vaught, Robert},
       title={Invariant sets in topology and logic},
        date={1974/75},
        ISSN={0016-2736},
     journal={Fund. Math.},
      volume={82},
       pages={269\ndash 294},
        note={Collection of articles dedicated to Andrzej Mostowski on his
  sixtieth birthday, VII},
}

\bib{Zielinski2016}{article}{
      author={Zielinski, Joseph},
       title={The complexity of the homeomorphism relation between compact
  metric spaces},
        date={2016},
        ISSN={0001-8708},
     journal={Adv. Math.},
      volume={291},
       pages={635\ndash 645},
         url={http://dx.doi.org/10.1016/j.aim.2015.11.051},
      review={\MR{3459026}},
}

\end{biblist}
\end{bibdiv}

\end{document}